\theoremstyle{plain}
\newtheorem{theorem}{Theorem}
\newtheorem{lemma}{Lemma}
\newtheorem{proposition}{Proposition}
\theoremstyle{definition} % For roman text in the body
\newtheorem{remark}{Remark}
\renewcommand{\vec}[1]{\mathbf{#1}}
\newcommand{\parallelsum}{\mathbin{\!/\mkern-5mu/\!}}
\newcommand{\eps}{\varepsilon}
\newcommand{\average}[1]{ \langle#1 \rangle}
\newcommand{\Amat}{\mathsf{A}}
\newcommand{\Bmat}{\mathsf{B}}
\newcommand{\Rmat}{\mathsf{R}}
\newcommand{\Emat}{\mathsf{E}}
\newcommand{\Xmat}{\mathsf{X}}
\newcommand{\Vmat}{\mathsf{V}}
\newcommand{\Jmat}{\mathsf{J}}
\newcommand{\Imat}{\mathsf{I}}
\newcommand{\vecf}{\vec{f}}
\newcommand{\opA}{\mathcal{A}}
\newcommand{\opB}{\mathcal{B}}
\newcommand{\opJ}{\mathcal{J}}
\newcommand{\rd}{\mathrm{d}}
\newcommand{\RR}{\mathbb{R}}
\newcommand{\Sp}{\mathbb{S}}
\newcommand{\Kn}{\mathsf{Kn}}
\newcommand{\fvec}{\vec{f}}
\title{A new numerical approach to inverse transport equation with error analysis}
\author{Qin Li} 
\address{Mathematics Department, University of Wisconsin-Madison, 480 Lincoln Dr., Madison, WI 53705 USA.}
\email{qinli@math.wisc.edu}
\author{Ruiwen Shu} 
\address{Mathematics Department, University of Wisconsin-Madison, 480 Lincoln Dr., Madison, WI 53705 USA.}
\email{rshu2@wisc.edu}
\author{Li Wang} 
\address{Department of Mathematics, Computational and Data-Enabled Science and Engineering Program, State University of New York at Buffalo, 244 Mathematics Building, Buffalo, NY 14206 USA.}
\email{lwang46@buffalo.edu}
\thanks{The work of R. S. is supported in part by the National Science Foundation under the grant DMS-1522184 and DMS-1107291: RNMS KI-Net. Q. L. is supported in part by a start-up fund from UW-Madison and National Science Foundation under the grant DMS-1619778 and DMS-1107291: RNMS KI-Net. The work of L.W. is supported in part by a start-up fund from SUNY Buffalo and the National Science Foundation under the grant DMS-1620135.}
\begin{document}
\maketitle

\begin{abstract}
The inverse radiative transfer problem finds broad applications in medical imaging, atmospheric science, astronomy, and many other areas. This problem intends to recover the optical properties, denoted as absorption and scattering coefficient of the media, through the source-measurement pairs. A typical computational approach is to form the inverse problem as a PDE-constraint optimization, with the minimizer being the to-be-recovered coefficients. The method is tested to be efficient in practice, but lacks analytical justification: there is no guarantee of the existence or uniqueness of the minimizer, and the error is hard to quantify. In this paper, we provide a different algorithm by levering the ideas from singular decomposition analysis. Our approach is to decompose the measurements into three components, two out of which encode the information of the two coefficients respectively. We then split the optimization problem into two subproblems and use those two components to recover the absorption and scattering coefficients {\it separately}. In this regard, we prove the well-posedness of the new optimization, and the error could be quantified with better precision. In the end, we incorporate the diffusive scaling and show that the error is harder to control in the diffusive limit.  

\end{abstract}

%%%%%%%%%%%%%%%%%%%%%%%%%%%%%%%%%%%%%%%%%%%%%%%%%%
\section{Introduction}
Radiative transfer equation (RTE) describes the dynamics of (photon) particles in materials with various optical properties. It has been used in atmospheric science, medical imaging and many other areas as a basic model. The equation can take different forms, depending on the degrees of generality. Among them, a stationary, frequency independent form reads:
\begin{equation} \label{eqn:000}
v\cdot\nabla_xf + \sigma(x) f = \int k(x,v,v')f(x,v')\rd{v'} \,,
\end{equation}
where $f(x,v)$, defined on phase space, is the distribution of particles at location $x$ and with velocity $v$. Here $x\in\Omega\subset\RR^d$ with $d=2,3$, and $v\in V = \Sp^{d-1}$, the {\it unit} sphere in $\RR^d$. $k(x,v,v')$ is termed the scattering coefficient, representing the probability of particles moving in direction $v'$ changing to direction $v$ at location $x$. $\sigma(x)$ is the total absorption coefficient that represents certain amount of photon particles being absorbed by the material. Here we assume that $\sigma$ has no velocity dependence. The boundary is separated into an ``outgoing" and an ``incoming" part by defining: 
\begin{equation}\label{eqn:boundary}
\Gamma_\pm = \{(x,v): x\in\partial\Omega\,, \pm v\cdot n_x >0\}\,,
\end{equation}
where $n_x$ is the normal direction pointing out of $\Omega$ at $x\in\partial\Omega$. In this way, $\Gamma_-$ collects all boundary coordinates that represent particles coming into the domain where $\Gamma_+$ collects the opposite. For the wellposedness of RTE, we require inflow boundary condition, i.e. the data imposed on the ``incoming" part of $\Gamma_-$:
\begin{equation} \label{eqn:BC}
f|_{\Gamma_-} = f_-(x,v)\,, \quad (x,v) \in \Gamma_-\,.
\end{equation}

In many applications, light is sent to a bulk of material with unknown absorption and scattering properties, and light current propagating out of the material is measured. Scientists need to adjust the sources and measurement locations for recovering the material properties. This technique is used in medical imaging where near infrared light (NIR) is sent into biological tissues for tumor or bone structure~\cite{Hielscher_rheumatoid1,Hielscher_rheumatoid2}; it is also used in outer space studies: during Galileo's travel around Jupiter, pictures are taken by the near infrared mapping spectrometer (NIMS), and scientists recover components of atmosphere on each satellite by inverting RTE, through which they found that Io is covered by $SO_2$ mainly~\cite{Doute_mapping_so2}.

We study these problems from mathematical and computational side. Mathematically, we typically assume that no prior information on $\sigma$ and $k$ is known, but the entire incoming to outgoing map is given. This map is termed the {\it albedo} operator:
\begin{equation}\label{eqn:albedo}
\mathcal{A}l: \qquad f_- \mapsto f\big|_{\Gamma_+}\,.
\end{equation}
Then the goal of inverse RTE is to recover $\sigma$ and $k$ through the albedo operator. 

The wellposedness of this problem was considered in~\cite{ChoulliStefanov}, in which the authors showed 
that, given the albedo operator ~\eqref{eqn:albedo}, a full recovery of both $\sigma$ and $k$ is possible in 3D whereas in 2D, only $\sigma$ is recoverable. Some following up studies include: utilizing the Born series for the recovery~\cite{Machida_Schotland_inverse}; the illposedness of the problem if the operator's output is changed to flow current (having no velocity angle information)~\cite{Bal08,Bal09}; the passage to the illposedness in fluid regime~\cite{ChenLiWang}; and studies on various scenarios~\cite{Bal_Tamasan_sourse,BalChungSchotland}. Most of these analytical studies use the technique termed ``singular decomposition" invented in~\cite{ChoulliStefanov}. In that paper, the authors separate the data according to the singularities of different components in the measurement, each of which is in charge of recovering one property. See also a review~\cite{Bal_review}. However, despite its effectiveness in analysis, the singular decomposition idea barely sees its direct use in computation: it is unknown if the process could be repeated numerically or on real experiment, let alone the error analysis it induces in practice.

From the computational side, the topic has been extensively studied in many scenarios as well ~\cite{Arridge_couple,Arridge_piecewise_const,Ren_fPAT,Cheng_Gamba_Ren_Doping, SRH05}. Reviews could be found in Arridge~\cite{Arridge99,Arridge_Schotland09} and Ren~\cite{Ren_review}. One typical formulation is to first rewrite the equation into an optimization form and then run optimization algorithms for the recovery. More specifically, one samples $N_x$ and $N_v$ grids for $x$ and $v$ respectively, and writes the equation in the discrete form:
\begin{equation*}
(\Amat +\Bmat)\cdot\vec{f} = \vec{0}\,,\quad\text{with}\quad \vec{f}|_{\Gamma_-} = \vec{f}_-\,.
\end{equation*}
Here $\vec{f}$ is the solution sampled on all grid points:
\begin{equation} \label{fvector}
\vec{f}=[f(x_1,v_1)\,,f(x_2,v_1)\,,\cdots\,,f(x_n,v_1)\,,f(x_1,v_2)\,,\cdots\,, f(x_{N_x},v_2)\,,\cdots\,, \cdots\,,f(x_{N_x},v_{N_v})]^t\,,
\end{equation}
and $\vec{f}_-$ is $f_-$ evaluated on grid points on $\Gamma_-$. Considering the dimension of $x$ and $v$, the subscript $i$ and $j$ can be multi-indexed.
\begin{equation}\label{eqn:A_Bmat}
-\Amat = \Vmat\otimes \nabla_x+ \Imat \otimes \sigma ,\quad\Bmat = \Sigma_k \,,
\end{equation}
are discrete version of the transport and the scattering operator, where $\Vmat$ is a diagonal matrix of size $N_v \times N_v$ with diagonal elements $v_i$, and $\nabla_x$ is an $N_x \times N_x$ finite difference matrix in $x$ (depending on the scheme one uses). $\sigma$ is an $N_x \times N_x$ matrix with diagonals $\sigma(x_i)$ and $\Sigma_k$ is a block matrix with $N_x$ blocks, each block is of size $N_v \times N_v$.

Given several rounds of experiments with $\{\vec{f}^{(i)}_-\,,i = 1,\cdots N_I\}$ as the inflow on $\Gamma_-$, and the measured data $\{\phi^{(i)}\,,i=1,\cdots,N_I\}$ as the outflow on $\Gamma_+$, the typical set-up of the numerical inverse problem is to perform the following optimization problem:
\begin{equation}\label{opt-old}
\left\{\begin{array}{c}
\min_{\sigma, \Sigma_k, \{\fvec^{(i)} \} } \sum_{i} \|  \Emat_+ \fvec^{(i)}  - \vec{\phi}^{(i)}  \| + \text{regularization}
\\  {\text s.t.} \quad (\Amat + \Bmat)\cdot \vecf^{(i)}  = \vec{0}\,,\quad \Emat_-\vec{f}^{(i)} = \vec{f}_-^{(i)}\,,\quad\forall i
\end{array} \right. \,.
\end{equation}
Here the superscript $i$ denotes different experiments, and $\Emat$ is the confining operator: 
\begin{equation} \label{eqn:Emat}
\Emat_\pm\vec{f} = \vec{f}|_{\Gamma_{\pm}}\,.
\end{equation}

One advantage of this approach is that it is very straightforward, and the regularization could be adjusted to fit a priori information (for example, TV norm used on $\sigma$ for piecewise constant cases). Disadvantage is obvious as well, as mentioned in~\cite{Ren_review}: on one hand, it is unknown that whether the minimizer exists, or is unique, and the problem tends to be either overdetermined or underdetermined; on the other hand, the computational size is huge. There are $N_x$ unknowns in $\sigma$ and $N_xN_v$ unknowns in $\Sigma_k$, and in $d=3$ case, $N_x$ is about $N^3$ and $N_v$ is about $N^2$, with $N$ being the number of grid point per direction. It is extremely expensive to update even one iteration in the optimization problem. Multiple strategies are invented as modifications for better efficiency, such as utilizing the diffusion approximation~\cite{Arridge_diffusion,Arridge_diffusion_error}, linearization~\cite{Ren_review}, or using gradient instead of Jacobian for the updating~\cite{Arridge_gradient}. However, despite all the effort, it is nevertheless a pity that the wellposedness results from the analysis side is not benefitting the computation, and it is extremely hard to quantify the error of any of these methods.

In this paper, we intend to fill in the {\it gap} between analysis and computation. More specifically, we will design an algorithm that 1) is efficient, and 2) leverages as much analysis results as possible. This allows us to spell out the well-posedness and the error analysis in an exact fashion numerically. Our idea is based on the singular decomposition analysis, and we will numerically separate the three components in the measurements, using one to recover $\sigma$ and another for $k$.

More precisely, consider a concentrated incoming data $f_-$, let $\phi(x,v)$ be the solution of \eqref{eqn:000} confined on $\Gamma_+$: $\phi= f\big|_{\Gamma_+}$, then analysis in \cite{ChoulliStefanov} tells that $\phi = \phi_1 + \phi_2 + \phi_3$ with its components enjoying different singularities and thus could be separated from each other. Specifically, with $\phi_1$ separated from the rest, it could be used to recover $\sigma$:
\begin{equation*}
\mathcal{R}[\sigma] = \int_0^{\tau_-(x,v)} \sigma (x-sv) ds = \ln \left( \frac{  f_-(x-\tau_-(x,v)v,v)  }{\phi_1(x,v)}\right) \,,
\end{equation*}
where $\tau_-$ is the time needed for a non-scattering photon passing through $\Omega$ and emitting at $(x,v)\in\Gamma_+$. $\mathcal{R}$ is the X-ray transform that has been proved to be reversible. We repeat this procedure numerically. Denote $\phi_{R,1}$ the first component that gets extracted numerically from $\phi$, we show that (Theorem~\ref{thm:separation})
\begin{equation} \label{eqn:718}
\frac{|\phi_{R,1} - \phi_1|}{ |\phi_1|} = \mathcal{O}(\eps_1^{1-\delta} \eps), \quad \text{ for any}~ \delta >0\,,
\end{equation}
with $\eps_1$ and $\eps$ standing for the width of the concentrating inflow data and outflow measurement. Consequently, the numerically recovered $\sigma$ has the following the error estimate (Theorem~\ref{error-sigmaa})
\begin{equation*}
\| \sigma - \sigma^{dis} \|_2 \lesssim (\eps_1^{1-\delta}\eps + \Delta x ^2)^{1/2}\,,
\end{equation*}
where $\sigma^{dis}$ is the true absorption coefficient evaluated at grid points, and $\Delta x$ is the discretization in $x$. Here we only analyze the recovery of $\sigma$. We believe that similar analysis for the scattering coefficient $k$ can be done, but it will be much more involved and we leave it to future work. In the end of this paper, we also study the inverse RTE in diffusion regime using this approach. When the RTE can be well approximated by the diffusion equation, one gains big error in the data separation step \eqref{eqn:718}, and this error propagates in recovering $\sigma$. The whole scheme therefore breaks down. 

The rest paper is organized as follows. In the next section, we recall the singular decomposition theory used in proving the well-posedness of the inverse problem, and make an analogy in the discrete setting. In Section 3, we set up the new algorithm and provide details in the implementation. Section 4 is devoted to the error analysis, which consist of two major parts---error in the data separation and inversion. In Section 5, we introduce a diffusive scaling to the RTE and revisit the algorithms and error analysis in the presence of multiple scales.

%%%%%%%%%%%%%%%%%%%%%%%%%%%%%%%%%%%%%%%%%%%%%%%%%%%
\section{Singular decomposition}
The base of our algorithm is the singular decomposition to the measured data. In this section, we first review this technique developed in \cite{ChoulliStefanov}, and then extend it in the discrete setting that we will be working on. Let us denote 
\begin{equation}\label{eqn:notation1}
\opA = -v \cdot \nabla_x - \sigma(x), \quad \opB f = \int k(x,v,v') f(v') dv' \,,
\end{equation}
then equation \eqref{eqn:000} rewrites $(\opA + \opB)f =0$. One sees that $\opA$ consists of a free transport and damping whereas $\opB$ encodes the scattering. We let 
$$\tau_\pm (x,v) = \min \{ t \geq 0, ~x \pm tv \in \partial \Omega \}, \qquad \tau = \tau_-+ \tau_+\,,$$ 
be the time for a free transport of photon located at $x$ with velocity $v$ to travel out of $\Omega$ forward or backward. 
We also assume that 
$$\sigma(x) - \int k(x,v,v') dv' \geq \nu > 0, \quad \text{for } a.e.~ (x,v) \in \Omega \times V\,,$$
in which case the forward problem \eqref{eqn:000} is well-posed, and $(\sigma, k)$ pair is called admissible. 

\subsection{Continuous setting}
In the continuous setting, the singular decomposition is proposed in \cite{ChoulliStefanov}. The idea is that when the incoming data is concentrated around one point in $\Omega \times \Gamma_-$ space, the solution to \eqref{eqn:000} can be decomposed into three parts that enjoy different degrees of singularities, wherein the leading two singular terms can be used to recover $\sigma$ and $k$ respectively. Indeed, formally one can write the solution to 
\begin{equation} \label{eqn:001}
\left\{ \begin{array}{cc}
(\opA + \opB )f = 0 & (x,v)\in \Omega \times V
\\ f\big|_{\Gamma_-} = f_-(x,v)\, & 
\end{array} \right.
\end{equation}
as 
\begin{eqnarray}
f &=& (I + \opA ^{-1} \opB)^{-1} \opJ f_-  \nonumber
\\ &=& \opJ f_- - (\opA^{-1} \opB) \opJ f_- + (I + \opA^{-1}\opB)^{-1} (\opA^{-1}\opB)^2 \opJ f_-\, \nonumber
\\ & := &  f_1 + f_2 + f_3\,, \label{eqn:decomp1}
\end{eqnarray}
where $\opJ : \Gamma_- \rightarrow \Omega\times V$ is the solution to the pure transport and damping, i.e., 
\begin{equation}\label{eqn:define_opJ}
\opA(\opJ f_-) = 0, \quad \opJ f_- \big|_{\Gamma_-} = f_-\,.
\end{equation}
It maps the boundary condition to the entire $\Omega\times V$, with an explicit form:
\begin{equation} \label{eqn:opJ}
\opJ f_-(x,v) = e^{-\int_0^{\tau_-(x,v)} \sigma(x-sv)\rd{s}} f_-(x-\tau_-(x,v)v,v) \,.
\end{equation}
The inverse of $\opA$, denoted as $\opA^{-1}:\Omega\times V\rightarrow \Omega\times V$ has the form:
\begin{equation*}
\opA^{-1}f = -\int_0^{\tau_-(x,v)}e^{-\int_0^t \sigma(x-sv)\rd{s}} f(x-tv,v)\rd{t}\,.
\end{equation*}
It satisfies $\opA(\opA^{-1}f) = f$ with $(\opA^{-1}f)|_{\Gamma_-}=0$.

From \eqref{eqn:decomp1}, one sees that $f_1$ represents the solution with pure absorption, $f_2$ denotes solution after one scattering, and $f_3$ collects the rest.

The solutions to \eqref{eqn:001}, when confined on the boundary $\Gamma_+$, is the outgoing data that we measure. Let us denote it by $\phi$, then 
\begin{equation}\label{eqn:define_phi}
\phi = f|_{\Gamma_+} \quad\text{and}\quad \phi_i = f_i|_{\Gamma_+}\,,\quad i = 1, 2, 3\,.
\end{equation}
Immediately
\begin{equation*}
\phi = \sum_{i=1}^3\phi_i = \opA l[f_-]\,.
\end{equation*}

We now study the structure of $\opA l$. For that we use a delta function as the incoming data for $f_-$, and have the following theorem:
\begin{theorem}[\cite{ChoulliStefanov}]\label{thm:f}
Assume that $(\sigma, k)$ is admissible. Then the solution to \eqref{eqn:000} with
\begin{equation} \label{eqn:f-}
f_-(x,v) = \delta (x-x') \delta (v-v'), \quad (x',v') \in \Gamma_-
\end{equation}
has the decomposition $f(x,v;x',v') = f_1 + f_2 + f_3$, where
\begin{eqnarray}
&&f_1 = |n(x')\cdot v'| \int_0^{\tau_+(x',v')} e^{-\int_0^{{\tau_-}(x,v)} \sigma (x-sv)ds} \delta(x-x'-tv) \delta(v-v') \rd t \,; \label{eqn:f1}
\\&& f_2 =  |n(x')\cdot v'| \int_0^{{\tau_-}(x,v)} \int_0^{\tau_+(x',v')} e^{-\int_0^s \sigma(x-pv) dp} e^{-\int_0^{\tau_-(x-sv,v')}} \sigma(x-sv-pv', v')  \rd p \nonumber
\\ && \hspace{6cm} \times k(x-sv,v',v) \delta(x-x'-sv-tv') \rd t \rd s 
\\  &&(\min\{\tau, \lambda\})^{-1} |n(x') \cdot v'|^{-1} f_3 \in L^\infty (\Gamma_-; \mathcal{W})\,.
\end{eqnarray}
Here $\lambda \geq 0$ is an arbitrary constant, and 
\[
\mathcal{W} = \{f: f\in L^1(\Omega \times V),  v\cdot\nabla_x f \in L^1(\Omega \times V) \}\,.
\]
\end{theorem}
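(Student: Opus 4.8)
The natural route is to read the three pieces directly off the Neumann series \eqref{eqn:decomp1}, namely $f_1=\opJ f_-$, $f_2=-\opA^{-1}\opB\opJ f_-$, and $f_3=(I+\opA^{-1}\opB)^{-1}(\opA^{-1}\opB)^2\opJ f_-$, and then to evaluate each of them on the point-mass boundary datum \eqref{eqn:f-}. The first two terms are explicit computations with the kernels of $\opJ$ and $\opA^{-1}$; the genuine content of the statement is the regularity claim on $f_3$, so I would spend most of the effort there.

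For $f_1$, I would substitute \eqref{eqn:f-} into the explicit formula \eqref{eqn:opJ}. The backward characteristic through $(x,v)$ meets $\partial\Omega$ at $x-\tau_-(x,v)v$, so $\opJ f_-$ is supported exactly where $x-\tau_-(x,v)v=x'$ and $v=v'$, i.e.\ on the forward ray $\{x'+tv' : 0\le t\le\tau_+(x',v')\}$. Rewriting the surface point-mass as a line integral in the arc-length parameter $t$ produces the Jacobian $|n(x')\cdot v'|$, which is precisely the cosine relating the boundary measure $|n\cdot v|\,\rd\xi\,\rd v$ on $\Gamma_-$ to Lebesgue measure along the ray; this yields \eqref{eqn:f1}. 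For $f_2=-\opA^{-1}\opB f_1$ I would first apply $\opB$, whose velocity integral is collapsed by the factor $\delta(v-v')$ in $f_1$, turning the concentrated beam in direction $v'$ into a source proportional to $k(\cdot,v',v)$ that now emits in all directions $v$ along the same ray. Applying $-\opA^{-1}$ then transports and attenuates this source along the new direction $v$; the nested spatial deltas collapse to a single line constraint of the form $x-x'-sv-tv'=0$, leaving the advertised double integral over the scattering location $s$ and the post-scattering travel $t$, carrying the two attenuation exponentials and the kernel $k$ evaluated at the scattering point. This is essentially bookkeeping once the explicit kernels are in hand.

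The crux is the bound on $f_3$. Here I would first establish mapping estimates for $\opA^{-1}\opB$: the attenuated transport $\opA^{-1}$ integrates along characteristics (hence gains $L^1$-type smoothing in the spatial variable), while $\opB$ averages over velocities against the kernel $k$. The key geometric observation is that a single application of $\opA^{-1}\opB$ already converts a measure concentrated on a ray into an honest $L^1$ function, and a second application upgrades this to a function in $\mathcal{W}$, i.e.\ $L^1$ together with $v\cdot\nabla_x$ in $L^1$, with a bound that is \emph{uniform} in the boundary parameter $(x',v')$ once the Jacobian $|n(x')\cdot v'|$ and the transit-length weight $\min\{\tau,\lambda\}$ are scaled out. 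Admissibility, namely $\sigma-\int k\,\rd v'\ge\nu>0$, is what guarantees that $\opA^{-1}\opB$ is a contraction in the relevant norm, so that $(I+\opA^{-1}\opB)^{-1}$ is bounded and maps $\mathcal{W}$ into itself; applying it to $(\opA^{-1}\opB)^2\opJ f_-\in\mathcal{W}$ then delivers the claimed membership $(\min\{\tau,\lambda\})^{-1}|n(x')\cdot v'|^{-1}f_3\in L^\infty(\Gamma_-;\mathcal{W})$.

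I expect the main obstacle to be the uniform $\mathcal{W}$-estimate on the double-scattering operator $(\opA^{-1}\opB)^2$. One must show that integrating the singular transport kernel twice against the attenuation genuinely dissolves the delta concentration present in $f_1$ and $f_2$ and, moreover, that the transport derivative $v\cdot\nabla_x$ of the resulting function is still $L^1$. This requires careful handling of the geometry of the iterated characteristic integral near $\partial\Omega$—in particular the degeneracy along grazing rays, where $\tau_+(x',v')\to 0$ and $|n(x')\cdot v'|\to 0$, which is exactly what the weight $(\min\{\tau,\lambda\})^{-1}|n(x')\cdot v'|^{-1}$ is designed to absorb. Once this estimate is secured, the contraction property from admissibility makes the remaining inversion step routine.
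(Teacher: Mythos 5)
The paper never proves this theorem --- it is imported verbatim from \cite{ChoulliStefanov} --- so the only in-paper material to compare against is the formal Neumann-series decomposition \eqref{eqn:decomp1}, which is precisely your starting point, and the kernel estimates (Lemmas~\ref{lem1.1}--\ref{lem2.2}) that the paper develops to prove its own refinement, Proposition~\ref{thm1}. Your overall architecture is the standard and correct one: take $f_1=\opJ f_-$, $f_2=-\opA^{-1}\opB\opJ f_-$, $f_3=(I+\opA^{-1}\opB)^{-1}(\opA^{-1}\opB)^2\opJ f_-$, evaluate the first two explicitly on the point mass \eqref{eqn:f-}, and reduce the third to mapping estimates plus invertibility of $I+\opA^{-1}\opB$ under admissibility. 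Your derivation of \eqref{eqn:f1} (the factor $|n(x')\cdot v'|$ as the Jacobian relating boundary measure to arc length along the ray) and your description of $f_2$ (collapse of the velocity integral in $\opB$ by $\delta(v-v')$, then attenuated transport by $\opA^{-1}$) are both right.

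The genuine gap is in what you call the ``key geometric observation'': it is false that a single application of $\opA^{-1}\opB$ converts the ray measure into an honest $L^1$ function. That application produces exactly $-f_2$, and $f_2$ is still a singular measure: for each fixed outgoing direction $v$ its spatial support is the two-dimensional sector $\{x'+tv'+sv\colon s,t>0\}$, so in the five-dimensional phase space $\Omega\times V$ it is a delta concentrated on a codimension-one manifold. This is what \eqref{alpha2} displays, and it is the very reason the theorem has three terms rather than two --- and the reason $k$ can be read off from $\phi_2$ at all; if your claim were true, the singular-decomposition strategy of the whole paper would collapse. Your subsequent step (second application upgrades $L^1$ to $\mathcal{W}$) is internally consistent but rests on this false premise. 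The conclusion $(\opA^{-1}\opB)^2\opJ f_-\in\mathcal{W}$ is true, but establishing it requires showing that the velocity average in $\opB$, composed with $\opA^{-1}$, regularizes a surface delta into a function: quantitatively, $\opB\opA^{-1}\opB f_1$ is the first object in the chain that is a function, and even it only lies in $L^p$ for $p<2$ because of the $|x_\perp|^{-1}$ singularity along the incident ray, as the paper's Lemma~\ref{lem2.1} computes via the bound $|K_1(x,v,y,w)|\le C|x-y|^{-2}$ and the Hardy--Littlewood--Sobolev inequality (Lemma~\ref{lem1.3}); membership in $\mathcal{W}$ then follows from the transport identity $v\cdot\nabla_x f_3=-\sigma f_3+\opB f_2+\opB f_3$, whose right-hand side is controlled by exactly that bound. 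Since your sketch misstates this smoothing step and explicitly defers the uniform estimate (``I expect the main obstacle to be\ldots'') rather than carrying it out, the part of the theorem that is not mere bookkeeping remains unproven.
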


The albedo operator only takes the information on $\Gamma_+$, and we thus write the distribution kernel $\alpha_i(x,v, x',v')$ as a confinement
\[
\alpha_i(x,v;x',v') := f_i(x,v;x',v') \big|_{(x,v) \in \Gamma_+}, \quad (x',v') \in \Gamma_- \,, \qquad i = 1, 2, 3\,.
\]
It maps $(x',v')\in \Gamma_-$ to $(x,v)\in \Gamma_+$, and could be explicitly expressed:
\begin{theorem}[\cite{ChoulliStefanov}] \label{thm:alpha}
Assume that $(\sigma, k)$ is admissible. Then $\alpha(x,v;x',v') = \alpha_1 + \alpha_2 + \alpha_3$, where
\begin{eqnarray}
&& \alpha_1(x,v; x',v') =  \frac{|n(x')\cdot v'|}{n(x)\cdot v}   e^{-\int_0^{\tau_-(x,v)} \sigma(x-sv) \rd s} \delta(x-x'-\tau_+(x',v')v') \delta(v-v')\,; \label{alpha1}
\\ && \alpha_2(x,v;x',v') = \frac{|n(x')\cdot v'|}{n(x) \cdot v} \int_0^{\tau_+(x',v')} e^{-\int_0^{\tau_+(x'+tv',v)} \sigma(x-pv)\rd p} e^{-\int_0^t \sigma(x+pv')\rd p} \nonumber
\\ && \hspace{6cm} \times k(x+tv',v',v) \delta(x-x'-tv'-\tau_+(x'+tv',v)v) \rd t \,;\label{alpha2}
\\ && \min \{\tau(x',v'), \lambda   \}^{-1} |n(x')\cdot v'|^{-1}\alpha_3 \in L^\infty(\Gamma_-; L^1(\Gamma_+, \rd\xi))\,. \label{alpha3}
\end{eqnarray}
\end{theorem}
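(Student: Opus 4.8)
The plan is to obtain each $\alpha_i$ simply as the trace on $\Gamma_+$ of the corresponding $f_i$ already produced in Theorem~\ref{thm:f}, following the definition \eqref{eqn:define_phi}. The decomposition $\alpha = \alpha_1+\alpha_2+\alpha_3$ is then immediate from $f=f_1+f_2+f_3$ and linearity of the restriction map, so the real content is not the splitting but the computation of each trace, and in particular the appearance of the transversality factor $1/(n(x)\cdot v)$, which is absent from the interior formulas. I would treat the three terms in order of increasing leniency: $\alpha_1$ and $\alpha_2$ by explicit change of variables, $\alpha_3$ by a regularity/trace argument only.

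First, for $\alpha_1$, I would start from \eqref{eqn:f1}, where $f_1$ is an integral along the characteristic $t\mapsto x'+tv'$ of a delta $\delta(x-x'-tv)\delta(v-v')$ weighted by the attenuation $e^{-\int_0^{\tau_-(x,v)}\sigma(x-sv)\rd s}$. Restricting to $(x,v)\in\Gamma_+$ forces $v=v'$ and $x=x'+tv'\in\partial\Omega$, which happens exactly at $t=\tau_+(x',v')$. The key step is to convert the interior delta $\delta(x-x'-tv)$, integrated in $t$, into a surface delta on $\partial\Omega$: this is a one-dimensional change of variables along the ray, and since the ray crosses $\partial\Omega$ transversally with normal speed $n(x)\cdot v$, the associated Jacobian produces $\delta(x-x'-\tau_+(x',v')v')/(n(x)\cdot v)$. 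Evaluating the attenuation at the exit point, where $\tau_-(x,v)=\tau(x',v')$, then yields precisely \eqref{alpha1}. I would perform this slicing against test functions supported on $\Gamma_+$ to make the distributional manipulation rigorous.

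The same mechanism applies to the explicit $f_2$ of Theorem~\ref{thm:f}, now with a two-segment geometry: a particle enters at $x'$ along $v'$, scatters at $y=x'+tv'$ from $v'$ to $v$, and exits along $v$. Restricting $x=y+sv$ to $\Gamma_+$ fixes the exit time $s=\tau_+(x'+tv',v)$, and the $s$-delta contributes the same factor $1/(n(x)\cdot v)$ from transversal crossing. The remaining task is purely bookkeeping: re-express the two attenuation exponents and the scattering kernel $k$ in the boundary variables $(x,v;x',v')$, so that the single surviving integral over the scattering time $t$ matches the integrand of \eqref{alpha2}. For $\alpha_3$ I would compute nothing explicitly: I would invoke the regularity statement that $(\min\{\tau,\lambda\})^{-1}|n(x')\cdot v'|^{-1}f_3\in L^\infty(\Gamma_-;\mathcal{W})$ with $\mathcal{W}=\{f\in L^1:\,v\cdot\nabla_x f\in L^1\}$, and apply the standard trace theorem for the transport operator, which guarantees that functions in $\mathcal{W}$ admit well-defined $L^1$ boundary traces. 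This immediately gives \eqref{alpha3}.

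The main obstacle is the rigorous passage from interior distributions to boundary distributions, i.e.\ justifying the traces of the singular objects $f_1,f_2$ and extracting the weight $1/(n(x)\cdot v)$. The cleanest route is to test against smooth functions on $\Gamma_+$ and use the coarea formula to slice the interior delta along characteristics; the transversality of the rays at $\partial\Omega$ — which degenerates as $n(x)\cdot v\to 0$ at grazing directions — is what makes this weight both natural and delicate, and is also why the statement is phrased with the $|n(x')\cdot v'|$ and $1/(n(x)\cdot v)$ factors throughout.
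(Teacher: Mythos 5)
This theorem is not proved in the paper at all: it is imported verbatim from \cite{ChoulliStefanov}, and the only related machinery the paper itself develops is Proposition~\ref{thm1} (a strengthening of \eqref{alpha3} to $L^p$, $p<2$) via Lemmas~\ref{lem1.1}--\ref{lem2.2}. Measured against the cited source, your reconstruction is the right one for the first two terms: defining $\alpha_i$ as the $\Gamma_+$-confinement of the $f_i$ from Theorem~\ref{thm:f} (which is exactly how the paper introduces the $\alpha_i$), and converting the interior deltas, integrated along characteristics, into boundary deltas by slicing against test functions on $\Gamma_+$, with the transversal-crossing Jacobian producing the factor $1/(n(x)\cdot v)$. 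That mechanism, applied once along the single ray for \eqref{alpha1} and once along the exit leg for \eqref{alpha2} (fixing $s=\tau_+(x'+tv',v)$ and leaving the one-dimensional $t$-integral), is correct and is essentially the argument of Choulli--Stefanov.

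The soft spot is your treatment of $\alpha_3$. The claim that ``functions in $\mathcal{W}$ admit well-defined $L^1$ boundary traces'' is false without qualification: the transport trace theorem for $\mathcal{W}=\{f\in L^1:\, v\cdot\nabla_x f\in L^1\}$ only yields traces in $L^1_{\mathrm{loc}}(\Gamma_+,\rd\xi)$, or globally in $L^1$ with the degenerate weight $\min\{\tau(x,v),\lambda\}$ that vanishes at grazing directions. This matters here because in \eqref{alpha3} the weight $\min\{\tau(x',v'),\lambda\}^{-1}$ sits in the \emph{primed} (incoming) variables --- it is inherited from the $L^\infty(\Gamma_-)$ statement of Theorem~\ref{thm:f} --- so what you must produce in $(x,v)$ is a genuinely unweighted $L^1(\Gamma_+,\rd\xi)$ trace. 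Two standard repairs are available: (i) observe that $f_3$ has vanishing incoming trace (its outermost operator is $\opA^{-1}$), and apply the Green identity to $|f_3|$, giving $\int_{\Gamma_+}|f_3|\,\rd\xi \le \|v\cdot\nabla_x f_3\|_{L^1(\Omega\times V)}$; or (ii) use the explicit representation $f_3=\opA^{-1}(\opB\opA^{-1}\opB f)$ and bound the outgoing trace of $\opA^{-1}g$ by $\|g\|_{L^1(\Omega\times V)}$ directly from the formula for $\opA^{-1}$ --- this is precisely the paper's Lemma~\ref{lem1.1} with $p=1$, and is the route the paper itself takes when proving the stronger Proposition~\ref{thm1}. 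With either repair inserted, your outline becomes a complete proof.
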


We conclude that, for any incoming data $f_-(x,v)$:
\begin{equation}\label{eqn:phi-alpha}
\phi(x,v) = \opA l[f_-](x,v) = \sum_{i=1}^3\phi_i = \int_{\Gamma_-}\sum_{i=1}^3\alpha_i(x,v;x',v')f_-(x',v')\rd\xi(x',v')\,,
\end{equation}
where $\rd\xi(x',v') = |n(x') \cdot v| \rd\mu(x')\rd v $ is the measure on $\Gamma_-$, and the three components have very different singularities:
\begin{itemize}
\item[$\alpha_1$] is a delta function in both $x$ and $v$. It belongs to $L^1(\Omega\times V)$, and contains information only from $\sigma$ but not $k$.
\item[$\alpha_2$] is an integration of a delta function over a one dimensional manifold. The two exponentials reflect the particle traveling from $x'$ to $x'+tv'$ and from $x'+tv'$ to $x$ respectively. The particle changes its velocity from $v'$ to $v$ at $x'+tv'$ with the probability $k(x+tv',v',v)$. This term encodes the information of particles who travel and change directions once.
\item[$\alpha_3$] collects of all the rest information.
\end{itemize}

%%%%%%%%%%%%%%%%%%%%%%%%%%%%%%%%%%%%%%%%%%%%%%%%%%%%%%
\subsection{Discrete setting}
The same formulation can be written down on the discrete level. Using the notation from the introduction, we write the equation, incorporating the boundary conditions:
\begin{equation} \label{eqn:100}
(\Imat +\Amat ^{-1}\Bmat) \vecf = \Amat^{-1}\Jmat_0 \vecf_-\,,
\end{equation}
where $\vecf$ is defined in \eqref{fvector}. Here $\Jmat_0$ numerically resembles $\opJ$. Denote $N_{b\pm}$ the number of grid points $(x,v)$ on $\Gamma_\pm$, then $\Jmat_0$ is a matrix of size $N_xN_v\times N_{b_-}$. 

Using the Neumann expansion:
\begin{align*}
\left(\Imat+\Xmat\right)^{-1} &= \Imat - \left(\Imat+\Xmat\right)^{-1}\Xmat\,,\quad \nonumber\\
&= \Imat - \left(\Imat - \left(\Imat+\Xmat\right)^{-1}\Xmat\right)\Xmat\, \nonumber \\
&= \Imat - \Xmat + \left(\Imat+\Xmat\right)^{-1}\Xmat^2\,, \label{eqn:101}
\end{align*}
we let $\Xmat = \Amat^{-1}\Bmat$ and separate the solution of \eqref{eqn:100} into:
\begin{eqnarray}
\vecf &=& \Amat^{-1} \Jmat_0 \fvec_- - \Amat ^{-1}\Bmat \Amat^{-1} \Jmat_0 \vecf_-  + (\Imat + \Amat^{-1}\Bmat)^{-1} (\Amat^{-1} \Bmat )^2  \Jmat_0\fvec_- \nonumber
\\ &:=& \vecf_1 + \vecf_2 + \vecf_3\,.  \label{eqn:1011}
\end{eqnarray}

Comparing it with~\eqref{eqn:decomp1}, we see that the three vectors are simply counterparts of $f_i$. As suggested by Theorem~\ref{thm:f}, these three vectors should have different sparsities. Similar to the discussion for the continuous setting, here we see that $\vec{f}_1$ includes information on $\Amat$ only, which could be used to recover $\sigma$, while $\vec{f}_2$ takes up information from $\Bmat$ that is equivalent to $\Sigma_k$.

%%%%%%%%%%%%%%%%%%%%%%%%%%%%%%%%%%%%%%%%%%%%%%%%%%%%%%
\section{Numerical algorithm}
As mentioned in the introduction, most of the currently available algorithms are based on optimization, and they typically write as
\begin{equation}\label{opt-old}
\left\{\begin{array}{c}
\min_{\sigma, \Sigma_k, \{\fvec^{(i)}\}} \sum_{i} \|  \Emat_+ \fvec^{(i)}  - \vec{\phi}^{(i)}  \| + \text{regularization}
\\  {\text s.t.} \quad (\Amat + \Bmat) \vecf^{(i)}  = \mathbf{0}\,, \quad \vecf^{(i)}\big|_{\Gamma_-} = \vecf_-^{(i)}, \quad i = 1,2 \cdots, N_I\,,
\end{array} \right.
\end{equation}
where the superscript $i$ denotes different rounds of experiments, $N_I$ is the total number of experiments conducted, and $\Emat_+$ is defined in \eqref{eqn:Emat}. The approach is straightforward, but it is lack of analytical justification: there is no guarantee that the minimizer exists and will be unique, nor does it tell how to choose the correct regularization, and what will the error be. What is more, $\sigma$ and $\Sigma_k$ are recovered simultaneously which requires a lot of computation in each optimization iteration step.

In this section, we set up a new optimization framework in recovering $\sigma(x)$ and $k(x,v,v')$ {\it separately}. As indicated by the singular decomposition method from~\cite{ChoulliStefanov}, the measurement could be separated into three parts based on the different regularities they enjoy, and the first two terms encode information for $\sigma$ and $k$ respectively. Based on this, we propose a new way of the recovery, and this new approach comes with more rigorous error quantification.
 
\subsection{Algorithm set up}\label{sec:setup}
We first write down the algorithm in the continuous sense, following the ideas in~\cite{ChoulliStefanov}. From here on, we will assume that the experiments are well set in the sense that the measurement is placed at the boundary where free transport photons emit, corresponding to the input stimulus. Then the algorithm reads as follows.
\begin{itemize}
\item[] \underline{Algorithm (continuous)}
\item[] Input: concentrated source $f_-(x',v')$, $\forall ~(x',v') \in \Gamma_-$; measurement $\phi(x,v)$, $\forall (x,v) \in \Gamma_+$
\item[] Output: $\sigma(x)$, $k(x,v,v')$
\item[] Step 0) Decompose data $\phi = \phi_1+ \phi_2 + \phi_3$;
\item[] Step 1) Recover $\sigma(x)$ by solving the following problem
\begin{equation}\label{eqn:opt-cont1}
\min_{\sigma} \mathcal{F}\left(\mathcal{R}[\sigma](x,v) - a(x,v)\right)\,;
\end{equation}
\item[] Step 2) Recover $k(x,v,v')$ by solving 
\begin{equation} \label{eqn:opt-cont2}
\left\{ \begin{array}{c}
\min_{k, f}  \mathcal{F} \left( f\big|_{\Gamma_+} - \phi_1 -\phi_2  \right)
\\ {\text{s.t.}}~  (\opA(\sigma) + \opB) f = 0, ~f\big|_{\Gamma_-} = f_-
\end{array} \right.\,.
\end{equation}
\end{itemize}

In the problem we formulated, $\mathcal{F}$ is a {\it nonnegative convex} fit-to-data function. $\mathcal{R}$ is the X-ray transform:
\begin{equation*}
\mathcal{R}[\sigma](x,v) = \int_0^{\tau_-(x,v)} \sigma(x-sv)\rd{s}\,,\quad\forall (x,v)\in\Gamma_+\,,
\end{equation*}
and $a$ is calculated from the data:
\begin{equation}\label{eqn:def_cont_a}
a(x,v) = \ln \left(  \frac{f_-(x-\tau_-(x,v)v,v)}{\phi_1(x,v)}   \right)\,.
\end{equation}

To justify the validity of this algorithm, we note that:
\begin{itemize}
\item[Step 0] can be done due to the different singularities of $\phi_i$ according to Theorem~\ref{thm:alpha}, once the incoming data is made concentrated.
\item[Step 1] is written in an optimization form for later convenience, but in fact, the minimum could be achieved and is zero. Indeed, according to the definition in~\eqref{eqn:define_phi} and~\eqref{eqn:opJ}, one immediately sees that for all $(x,v)\in\Gamma_+$, given $x-x'  \parallelsum v$ and $v=v'$:
\begin{equation}\label{eqn:x_ray}
\mathcal{R}[\sigma](x,v) = \int_{0}^{\tau_-(x,v)}\sigma(x-sv)\rd{s} = \ln \left[ f_-(x',v')/ \phi_1(x,v)\right]\,,
\end{equation}
which is the same as $a(x,v)$ in \eqref{eqn:def_cont_a}.
\item[Step 2] is also written in the optimization form. From the theory in \cite{ChoulliStefanov}, a unique recovery of $k$ is available once $\sigma_a$ is obtained from the first step. Therefore, the minimum could be achieved and is zero.  
\end{itemize}

\begin{remark}
Another straightforward solver is to replace the optimization problem in Step 1 by:
\begin{equation} \label{eqn:opt-cont1-replace}
\left\{ \begin{array}{c}
\min_{\sigma, f} \mathcal{F} \left( f\big|_{\Gamma_+}  - \phi_1  \right)
\\ {\text{s.t.}}~  \opA f = 0, ~f\big|_{\Gamma_-} = f_-
\end{array} \right.\,;
\end{equation}
However, as we can see $\sigma$ here is involved in a nonlinear way, making the optimization problem harder to analyze.
\end{remark}
The same procedure could be taken in the discrete setting for numerical simulation.

\vspace{0.4cm}
\begin{itemize}
\item[] \underline{Algorithm (discrete)}
\item[] Input: concentrated source $\vecf_-^{(i)}$, concentrating around $(x^{(i)},v^{(i)}) \in \Gamma_-$; measurement $\phi^{(i)}$. $i=1,\cdots N_I$.
\item[] Output: $\Sigma$, $\Sigma_k$
\item[] Step 0) Decompose data $\phi^{(i)} = \phi_{R,1}^{(i)} + \phi_{R,2}^{(i)}+ \phi_{R,3}^{(i)}$ for all $i$\,;
\item[] Step 1) Recover $\sigma(x)$ by solving the following problem
\begin{equation}\label{eqn:opt-dis1}
\min_{\Sigma} \mathcal{F}\left(\Rmat\cdot\Sigma - \vec{a}\right)\,;
\end{equation}
\item[] Step 2) Recover $\Sigma_k$ by solving 
\begin{equation}\label{eqn:opt-dis2}
\left\{ \begin{array}{c}
\min_{\Sigma_k}  \mathcal{F} \left( \left\{  \Emat_+ \fvec ^{(i)} - \phi_1^{(i)} - \phi_2^{(i)} \right\}_{i=1,2,\cdots, N_I} \right)
\\ {\text{s.t.}}~  (\Amat(\Sigma) + \Bmat) \vecf^{(i)} =0, \quad \vecf^{(i)} \big|\Gamma_- =  \vecf_-^{(i)}, \quad i = 1, 2, \cdots, N_I
\end{array} \right.\,.
\end{equation}
\end{itemize}
Here the subindex $R$ in Step 0 indicates the numerical recovery. $\mathcal{F}$ is the discrete version of the fit-to-data function, and $\Rmat$ is the numerical integration of X-ray transform in~\eqref{eqn:x_ray}, with each of its row representing one experiment. Vector $\vec{a}$ consists of data collected at specific grid point:
\begin{equation}\label{eqn:def_dis_a}
\vec{a}_j = \ln \left( \frac{\vec{f}^{(j)}_-(x^{(j)}, v^{(j)})}{\phi^{(j)}_{R,1}(x_*^{(j)}, v^{(j)}_*)}   \right)  \,,
\end{equation}
where $(x^{(j)}, v^{(j)}) \in \Gamma_-$, and its counterpart denoted as $(x^{(j)}_*, v^{(j)}_*)$ takes the form
\begin{equation} \label{xv_star}
x_*^{(j)} = x^{(j)} + \tau_+(x^{(i)}, v^{(i)}) v^{(i)}, \quad v_*^{(j)} = v^{(j)} \,.
\end{equation}
As written, these steps are pure resemblance of the algorithm in the continuous setting, and each step requires a specially designed implementation, to ensure the wellposedness, and controllable error. We discuss the implementation in the following subsection, and the error analysis is left to Section~\ref{sec:error}.

\begin{remark}
An immediate advantage of our new formulation \eqref{eqn:opt-dis1} over the conventional one \eqref{opt-old} is the size reduction: instead of looking for $\Sigma$ and $\Sigma_k$ simultaneously, which is a problem of size $(N_xN_v+ N_x)^2$, we find $\Sigma$ first and then find $\Sigma_k$, and the former one is of a much reduced size. 
\end{remark}

\subsection{Implementation}
In this section, we will make clear how each of those steps in the discrete algorithms can be performed. 
\subsubsection{Decomposition}
Given a concentrated incoming data $f^{(i)}_-$ on $\Gamma_-$, one could collect the outgoing data $\phi^{(i)}$ on $\Gamma_+$, which analytically can be separated into three parts $\phi^{(i)}_{1/2/3}$. Numerically, however, it is not possible to conduct the separation exactly. Instead, we obtain the recovered data, denoted as $\phi^{(i)}_{R,1/2/3}$. Therefore, we need to find a way to define $\phi^{(i)}_{R,j}$ that is simple to obtain and close to $\phi^{(i)}_j$ enough with a small error. To this end, let us first assume that $f^{(i)}_-$ is concentrated around $(x^{(i)},v^{(i)})$, with the width smaller than $\Delta x$ and $\Delta v$. Therefore, $\vec{f}^{(i)}_-$ has only one nonzero value located at $(x^{(i)},v^{(i)})$ grid. Then we simply set
\begin{eqnarray}
&& \phi_{R,1}^{(i)}(x,v) = \begin{cases} \phi(x,v)\,,\quad & x = x^{(i)}_*, ~ v = v^{(i)}_* \\ 0\,,\quad&\text{elsewhere}\end{cases}\,,  \qquad (x^{(i)}, v^{(i)})\in \Gamma_-, \quad (x^{(i)}_*, v^{(i)}_*) \in \Gamma_+\,; \label{eqn:phi_1_recover} \\
&&   \phi_{R,2}^{(i)}(x,v) = \begin{cases} \phi(x,v)\,,\quad &\exists (s,t)~\text{s.t.}~ x-sv =x^{(i)} + tv^{(i)},  \quad v \neq v^{(i)}\\ 0\,,\quad &\text{elsewhere}\end{cases}\, ;\nonumber\\
&& \phi_{R,3}^{(i)} = \phi - \phi_{R,1} -\phi_{R,2}\,.\nonumber
\end{eqnarray}

\subsubsection{Recovering $\Sigma$}
To recover $\sigma$ from $\phi_1$, one just need to conduct an inverse X-ray transform as displayed in~\eqref{eqn:x_ray}. Since the X-ray transform has explicit inversion formula (will be detailed below), and it is in an integral form, one way in the discrete setting is to use quadrature rules to approximate inversion formula. This requires evaluating the integrand on the grids and performing the summation. However, the process is well-known to be numerically very unstable~\cite{Bronnikov_numerics_CT,Natterer_CT_book}. To overcome this difficulty, many strategies have been invented, including the algebraic reconstruction technique, direct algebraic methods, among many others~\cite{Natterer_CT_book}. Earlier in this century, more attention has been placed on using the optimization framework instead of a direct inversion and adopting the Tikhonov regularization to overcome the large conditioning. This is the approach that we will be taking.

Specifically, in Step 1, we modify the optimization with a regularizer:
\begin{equation}\label{eqn:opt-dis1-regularizing}
\min_{\sigma}\|\Rmat\cdot\sigma - \vec{a}\|_X + \lambda\|\sigma\|_Y\,,
\end{equation}
where the first term represents the mismatch and the second term is the regularizer ensuring the error in the measurement stay controlled. Both terms are convex, and the existence and the uniqueness of the minimizer is obvious. In the next section, we will analyze the error brought by the introduction of the regularizer. 

We remark here that a more straightforward form in recovering $\sigma_a$ in our problem could be 
 \begin{equation*}
\left\{ \begin{array}{c}
\min_{\sigma} \sum_{i=1}^{N_I}\|\Emat_+ \fvec ^{(i)}- \phi_1^{(i)}\|_X  +\lambda\|\sigma\|_Y
\\ {\text{s.t.}}~  \Amat \vecf^{(i)} = 0, \quad \vecf^{(i)}\big|_{\Gamma_-} = \vecf_-^{(i)}\,, \quad i = 1, 2, \cdots, N_I\,,
\end{array} \right.
\end{equation*}
which can be considered as a numerically implementation of~\eqref{eqn:opt-cont1-replace}. However, as claimed before, here $\sigma$ is involved in the problem in a nonlinear fashion, and it is not clear why the minimizer exists, or is unique, and the error would be hard to quantify.

To end this section, we include the inversion formula for the X-ray transform for completeness. In 2D ($x,v \in \mathbb{R}^2$), the X-ray transform is equivalent to Radon transform, which admits a unique inversion formula \cite{BK08}:
\begin{equation}\label{recon1}
\sigma(x) = \frac{1}{2\pi^2}\int _0^\pi \mathcal{R}[\sigma(\cdot, \theta)\ast h](x_1\cos\theta + x_2\sin\theta) \rd\theta\,.
\end{equation}
Here $h$ is the inverse Fourier transform of $|k|$, and $\mathcal{R}[\sigma]$ is defined in \eqref{eqn:x_ray}.
For dimension higher than two, the X-ray transform is different from Radon transform, and one needs to first translate a series of X-ray projection into a Radon projection and then perform the inverse Radon transform~\cite{katsevich_analysis_CT,Katsevich_improved_backprojection,Katsevich_inverse_xray}. Specifically, for helices trajectory of sources, denote 
\[
\mathcal{D} (y,v) = \int_0^{\tau_-(y,v)} \hspace{-0.3cm}\sigma(y-sv)\rd s\,,
\]
then for properly chosen vector $e_\nu(p,x)$ and weight $\mu_\nu$, the reconstruction formula is 
\begin{equation} \label{recon2}
\sigma(x) = -\frac{1}{2\pi} \int_{I_{BP}(x)} \frac{I(p,x)}{|x-y(p)|}\rd p\,,
\end{equation}
where
\begin{equation*}
I(p,x)=\sum_{\nu}^{N_e} \mu_\nu \int_{-\pi}^{\pi} \mathcal{D}'(y(p), \cos\gamma b + sin\gamma e_\mu) \frac{1}{\sin \gamma} \rd \gamma\,,
\end{equation*}
and the derivative of $\mathcal{D}$ is with respect to the first variable. $I_{BP}(x)$ is the back-projection interval \cite{BK08}. Here in either cases, we see that analytically a unique reconstruction of $\sigma(x)$ is available. 

%%%%%%%%%%%%%%%%%%%%%%%%%%%%%%%%%%%%%%%%%%%%%%%%%%%%%%
\section{Error analysis}\label{sec:error}
This section is devoted to analyzing the reconstruction error $\|\Sigma - \sigma^{dis}_a\|_2$, where $\Sigma$ is obtained from solving \eqref{eqn:opt-dis1-regularizing} with $\phi_{R,1}$ given in \eqref{eqn:phi_1_recover}. $\sigma^{dis}_a$ is the true media sampled on the grid points, with the superscript ``dis" indicating that it is the discrete version. The analysis below is confined in 3D.  

In the recovery for $\Sigma$, two steps are taken: the separation of data and the minimization for inverse X-ray transform. We cumulatively analyze them:
\begin{itemize}
\item[1)] data separation: to extract $\phi_1^{(i)}$ from the measurement $\phi^{(i)}$, some assumptions have been made, and we need to study $\| \phi_{R,1}^{(i)} - \phi_1^{(i)}\|$, the distance between the recovery \eqref{eqn:phi_1_recover} and the true data;
\item[2)] determine $\Sigma$ from \eqref{eqn:opt-dis1-regularizing} using discrete reconstruction formula. The regularization has been added to control the error from 1) but it inevitably introduces the regularizing error.
\end{itemize}
We examine each error closely in the following two subsections.

\subsection{Study of $\phi_{R,1} - \phi_1$}
According to~\eqref{eqn:phi_1_recover}, incoming data is placed at $(x_0,v_0)$ and $\phi_{R,1}$ is defined to be zero except for a particular point---$(x_0 + \tau_+(x_0,v_0)v_0, v_0)$---the counterpart of $(x_0, v_0)$ on $\Gamma_+$, and at this point, $\phi_{R,1}$ simply takes the value of $\phi$, with the intuition that both $\phi_2$ and $\phi_3$ have very limited contribution at this particular point. In this section we quantify the error produced by ignoring $\phi_{2/3}$'s contribution.

More precisely, assume the incoming source $f_-(x',v')=\psi \left(\frac{|x'-x_0|}{\eps} \right)\psi \left(\frac{|v'-v_0|}{\eps} \right)$ to concentrate at $(x_0,v_0)\in\Gamma_-$, and the measurement is taken in the neighborhood of its counterpart coordinate $(x_{0*}, v_{0*}):=((x_0+\tau_+(x_0,v_0)v_0,v_0)\in\Gamma_+$, i.e., 
\[
E_i =  \int_{\Gamma_+}\phi_i(x,v)\psi \left(\frac{|x-(x_0+\tau_+(x_0,v_0)v_0)|}{\eps_1} \right)\psi \left(\frac{|v-v_0|}{\eps_1} \right)\rd{\xi(x,v)}\,.
\]
Here $\eps$, and $\eps_1$ denote the concentration of the source and measurement respectively. $\psi$ is a smooth positive function supported on $[-1,1]$ with $\psi=1$ on $[-1/2,1/2]$. Then using \eqref{eqn:phi-alpha}, $E_i$ writes
\begin{equation}\label{eqn:E_i}
\begin{split}
E_i= & \int_{\Gamma_-}\int_{\Gamma_+}\alpha_i(x,v; x',v')\psi \left(\frac{|x'-x_0|}{\eps} \right)
\psi \left(\frac{|v'-v_0|}{\eps} \right) \\
& \hspace{2cm} \psi \left(\frac{|x-(x_0+\tau_+(x_0,v_0)v_0)|}{\eps_1} \right)\psi \left(\frac{|v-v_0|}{\eps_1} \right)\rd{\xi(x,v)}\rd{\xi(x',v')} \,.
\end{split}
\end{equation}
We will show that $E_1$ is much larger than $E_{2,3}$ for small $\eps$ and $\eps_1$, which implies that, at this particular point, the error $\phi_{R,1} - \phi_1 = \phi-\phi_1=\phi_2+\phi_3$ is small. In particular, we have: 
\begin{theorem}\label{thm:separation}
Consider the incoming data given by $f_-(x',v') = \psi \left(\frac{|x'-x_0|}{\eps}\right)\psi \left(\frac{|v'-v_0|}{\eps} \right)$, and $E_i$ defined in~\eqref{eqn:E_i}. Assume there exists positive constants $C_1$ such that
		\begin{equation*}
		\sigma(x) \le C_1,\quad k(x,v,v') \le C_1,\quad \forall x,v,v' \,,
		\end{equation*}
	 and $\tau_+$ is Lipschitz continuous near $(x_0,v_0)$. 
		Then there exists constants $c$, $C$, and $C_\delta$ such that
		\begin{equation*}
		E_1 \ge c \eps_1^4 ,\quad E_2 \le C\eps^4 \eps_1^2 ,\quad E_3 \le C_\delta\eps_1^{2-\delta} \eps^4
		\end{equation*}
		for any $\delta>0$.
Consequently, we have
\begin{equation}\label{eqn:E123}
\frac{E_2+E_3}{E_1} =\mathcal{O}(\eps_1^{-2-\delta} \eps^4)\, \quad \text{for any } ~ \delta >0\,,
\end{equation}
and thus the relative error is:
\begin{equation} \label{eqn:phi123}
\frac{|\phi_{R,1}-\phi_1|}{|\phi_1|}=O(\eps_1^{-2-\delta} \eps^4).
\end{equation}
\end{theorem}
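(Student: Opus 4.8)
The plan is to estimate the three terms $E_1,E_2,E_3$ separately, in each case inserting the explicit kernel $\alpha_i$ from Theorem~\ref{thm:alpha} into \eqref{eqn:E_i} and reducing the problem to measuring how much of the kernel survives once it is cut off by the two concentrating profiles: the source profile of width $\eps$ acting on $(x',v')\in\Gamma_-$, and the measurement profile $\psi_{meas}(x,v)=\psi(|x-x_{0*}|/\eps_1)\psi(|v-v_0|/\eps_1)$ acting on $(x,v)\in\Gamma_+$. Since in 3D each of $\partial\Omega$ and $V=\Sp^2$ is two--dimensional, a profile of width $\eps$ in one such variable contributes an effective measure $\eps^2$; the bookkeeping then amounts to counting which variables are pinned by the delta functions in $\alpha_i$ and which remain free, and assigning the correct power of $\eps$ or $\eps_1$ to each surviving integration.

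For $E_1$ I would use that $\alpha_1$ carries $\delta(v-v')\delta(x-x'-\tau_+(x',v')v')$. Resolving the $(x',v')$--integration against these deltas collapses $\phi_1(x,v)$ to the explicit value $\propto e^{-\int_0^{\tau_-(x,v)}\sigma(x-sv)\rd s}f_-(x-\tau_-(x,v)v,v)$, in which admissibility bounds the exponential and the geometric prefactors below by a positive constant. Using the Lipschitz continuity of $\tau_+$ (hence of the back--tracing map $(x,v)\mapsto x-\tau_-(x,v)v$), and noting that the lower bound is meaningful in the regime $\eps_1\lesssim\eps$, the source profile evaluated along this map stays $\ge\tfrac12$ throughout the $\eps_1$--neighborhood of $(x_{0*},v_0)$; thus $\phi_1\ge c>0$ on the whole measurement support, and integrating against $\psi_{meas}$ (which is bounded below by a constant on a set of measure $\sim\eps_1^4$) gives $E_1\ge c\,\eps_1^4$. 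For $E_2$, the kernel $\alpha_2$ carries a single delta plus one scattering--depth integration $t$. Since $k,\sigma\le C_1$ and $0\le t\le\tau_+\le C$, I would bound the integrand by a constant, resolve the delta to eliminate the $x$--integration, drop $\psi(|x-x_{0*}|/\eps_1)\le1$, and read off the effective measures: $f_-$ pins $(x',v')$ to a set of measure $\eps^4$, the explicit factor $\psi(|v-v_0|/\eps_1)$ pins the scattered direction $v$ to measure $\eps_1^2$, and the $t$--integral is $\order(1)$. Since every factor is nonnegative there is no cancellation, and this yields $E_2\le C\,\eps^4\eps_1^2$.

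The hard part will be $E_3$, where only the weak bound $\min\{\tau,\lambda\}^{-1}|n(x')\cdot v'|^{-1}\alpha_3\in L^\infty(\Gamma_-;L^1(\Gamma_+))$ is at hand. Using it crudely, by bounding $\psi_{meas}\le1$, gives only $E_3\le C\eps^4$ and misses the factor $\eps_1^{2-\delta}$ that must come from the measurement concentration. To recover it I would fix the source variable and apply H\"older in the $\Gamma_+$ variable, $\int_{\Gamma_+}|\alpha_3|\psi_{meas}\,\rd\xi\le\|\alpha_3\|_{L^{p}(\Gamma_+)}\|\psi_{meas}\|_{L^{p'}(\Gamma_+)}$ with $\|\psi_{meas}\|_{L^{p'}}\sim(\eps_1^4)^{1/p'}=\eps_1^{4/p'}$; the choice $p'=4/(2-\delta)$, i.e. $p=4/(2+\delta)<2$, produces exactly $\eps_1^{2-\delta}$, and the source profile then contributes the remaining $\eps^4$, giving $E_3\le C_\delta\,\eps^4\eps_1^{2-\delta}$. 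The price, and the reason a genuine $\delta>0$ with a blowing--up constant $C_\delta$ appear, is that this step requires $\alpha_3\in L^{p}(\Gamma_+)$ for every $p<2$ uniformly over the source support, which is strictly stronger than the stated $L^1$ bound and is the real crux: I would derive it from the extra smoothing in $f_3=(I+\opA^{-1}\opB)^{-1}(\opA^{-1}\opB)^2\opJ f_-$, exploiting that in 3D each factor $\opA^{-1}\opB$ averages over the velocity sphere so that double scattering pushes the trace into $L^p$ up to the $L^2$ endpoint, which just fails. Finally, combining $E_1\gtrsim\eps_1^4$, $E_2\lesssim\eps^4\eps_1^2$ and $E_3\lesssim_\delta\eps^4\eps_1^{2-\delta}$ gives $(E_2+E_3)/E_1=\order(\eps_1^{-2-\delta}\eps^4)$, and since at the measurement point $\phi_{R,1}-\phi_1=\phi_2+\phi_3$, the same ratio controls the relative error \eqref{eqn:phi123}.
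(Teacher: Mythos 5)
Your proposal is correct and follows the paper's own proof essentially step for step: the same Lipschitz-continuity/$\eps_1^4$ lower bound for $E_1$ (with the implicit restriction $\eps_1\lesssim\eps$, which you state more explicitly than the paper does), the same delta-resolution and measure counting giving $E_2\le C\eps^4\eps_1^2$, and the same H\"older argument for $E_3$ with the identical exponent $p=4/(2+\delta)<2$, resting on the improved regularity $\alpha_3\in L^\infty\bigl(\Gamma_-;L^p(\Gamma_+,\rd\xi)\bigr)$ for $p<2$, which is precisely the paper's Proposition~\ref{thm1}. The only difference is one of completeness: you sketch rather than execute that proposition, but you correctly identify both the mechanism (the smoothing of $\opB\opA^{-1}\opB$ in 3D, which the paper makes quantitative via the $|x-y|^{-2}$ kernel, Hardy--Littlewood--Sobolev, and the line-integral bound $\sim|x_\perp|^{-1}$) and the failure at the $L^2$ endpoint, which is exactly why the loss $\delta>0$ appears.
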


To get the relationship among $E_i$s, we need to estimate their magnitudes individually. From the relation \eqref{eqn:E_i} and the expression of $\alpha_1$ and $\alpha_2$ in \eqref{alpha1} \eqref{alpha2}, $E_1$ and $E_2$ can be evaluated straightforwardly. On the contrary, $E_3$ needs more sophisticated analysis and as such, we first bound $\alpha_3$, the kernel of the third part of the albedo operator, in the following theorem.

\begin{proposition}\label{thm1}
$\alpha_3(x,v;x',v')\in L^\infty(\Gamma_-,L^p(\Gamma_+,\rd{\xi}))$ if $p<2$.
\end{proposition}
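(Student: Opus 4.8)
The plan is to sharpen the $L^1(\Gamma_+)$ bound already contained in Theorem~\ref{thm:alpha} into an $L^p$ bound by tracking the two mechanisms that make $\alpha_3$ singular on $\Gamma_+$: the grazing factor $(n(x)\cdot v)^{-1}$ produced by the boundary trace, and the geometry of double scattering. First I would reduce the problem to the leading double-scattering term. Since $(\sigma,k)$ is admissible, the condition $\nu>0$ forces $\|\opA^{-1}\opB\|<1$ on $L^1(\Omega\times V)$, so the Neumann series $f=\sum_{n\ge 0}(-\opA^{-1}\opB)^n\opJ f_-$ converges and $f_3=\sum_{n\ge 2}(-\opA^{-1}\opB)^n\opJ f_-$. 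Every extra factor $\opA^{-1}\opB$ carries one more velocity integration through $\opB$, which only regularizes; hence the worst behavior on $\Gamma_+$ is carried by the $n=2$ term $(\opA^{-1}\opB)^2\opJ f_-$, while the same contraction estimate (now measured in the trace norm) makes the tail summable. It therefore suffices to bound the $\Gamma_+$-trace of $(\opA^{-1}\opB)^2\opJ f_-$ in $L^p(\Gamma_+)$, uniformly in $(x',v')$.

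Next I would compute this kernel explicitly. Using the formulas for $\opJ$, $\opA^{-1}$ and $\opB$, $\opJ f_-$ lives on the incoming ray $\{x'+s v'\}$; one application of $\opB$ followed by $\opA^{-1}$ reproduces the single-scattering term $\alpha_2$ of Theorem~\ref{thm:alpha}, supported on a codimension-one manifold (a delta on a manifold, as in Theorem~\ref{thm:f}). The second application integrates this delta against $\opB$, converting the angular integration into a position integral along the incoming ray and producing the characteristic transport Jacobian $|z_1-z_2|^{-(d-1)}$; the final boundary trace contributes the grazing factor $(n(x)\cdot v)^{-1}$. Since $\sigma,k\le C_1$ and all exponential damping factors are $\le 1$, this yields the uniform pointwise bound
$$
|\alpha_3(x,v;x',v')| \le \frac{C}{|n(x)\cdot v|}\int_0^{\tau_-(x,v)}\!\!\int_0^{\tau_+(x',v')}\frac{\rd s_1\,\rd s_2}{|z_1-z_2|^{d-1}}, \qquad z_1=x'+s_1 v',\quad z_2=x-s_2 v .
$$

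To conclude in $d=3$, I would observe that the double line integral above is at worst logarithmically singular, occurring only when the incoming ray $\{x'+s v'\}$ and the backward outgoing ray $\{x-s v\}$ nearly meet; a logarithm is locally integrable to every power and so is harmless to the $L^p$ threshold. The binding singularity is the grazing factor: a direct computation shows that the set $\{(x,v)\in\Gamma_+:(n(x)\cdot v)^{-1}>\lambda\}$ has $\rd\xi$-measure $\sim C\lambda^{-2}$, because $\rd\xi=|n(x)\cdot v|\,\rd\mu\,\rd v$ exactly cancels one power of $n(x)\cdot v$. Hence $(n(x)\cdot v)^{-1}$ lies in weak-$L^2(\Gamma_+,\rd\xi)$, and on the finite measure space $(\Gamma_+,\rd\xi)$ weak-$L^2$ embeds in $L^p$ for every $p<2$. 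Combining this with the pointwise bound gives $\sup_{(x',v')\in\Gamma_-}\|\alpha_3(\cdot,\cdot;x',v')\|_{L^p(\Gamma_+,\rd\xi)}<\infty$ for all $p<2$, which is the assertion, and the exponent $2$ is seen to be the natural threshold dictated by the grazing geometry.

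The main obstacle is the pointwise kernel bound of the second paragraph rather than the final integrability count. The delicate points are carrying out the angle-to-position change of variables cleanly across the second scattering (where $\alpha_2$ is only a distribution supported on a manifold), controlling the intermediate integral uniformly in the source $(x',v')\in\Gamma_-$, and, crucially, verifying that the line–line integral does not degenerate worse than logarithmically near collinear incoming/outgoing directions $v\to\pm v'$; establishing that this geometric factor stays locally integrable to every power is what guarantees that $p<2$ is forced by the grazing factor alone.
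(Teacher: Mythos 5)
Your overall skeleton (reduce to the twice--scattered term, convert the angular integration into a spatial one with Jacobian $|z_1-z_2|^{-2}$, then count integrability on $\Gamma_+$) is the same as the paper's, but your singularity bookkeeping contains two errors that happen to offset each other, and the proof as proposed cannot be completed. First, the grazing factor $(n(x)\cdot v)^{-1}$ is spurious. The kernels $\alpha_1,\alpha_2$ in Theorem~\ref{thm:alpha} carry $1/(n(x)\cdot v)$ only because they are delta distributions being rewritten as kernels with respect to the measure $\rd\xi=|n(x)\cdot v|\,\rd\mu\,\rd v$; by contrast $\alpha_3$ is the pointwise trace of the genuine function $f_3=\opA^{-1}(\opB\opA^{-1}\opB f)$, i.e.\ a line integral $\int_0^{\tau_-(x,v)}e^{-\cdots}\,(\opB\opA^{-1}\opB f)(x-tv,v)\,\rd t$ along the exit ray, and no boundary Jacobian appears. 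Indeed the paper's Lemma~\ref{lem1.1} proves the opposite of what you assume: in the $\rd\xi$-norm the change of variables $(x,v,t)\mapsto(x-tv,v)$ produces exactly the factor $|n(x)\cdot v|$ that $\rd\xi$ supplies, so $\|\opA^{-1}g|_{\Gamma_+}\|_{L^p(\rd\xi)}\le C\|g\|_{L^p(\Omega\times V)}$ with no grazing loss at all.

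Second, your claim that the double line integral is ``at worst logarithmically singular,'' hence integrable to every power, is false --- and this is exactly the step you flagged as the main obstacle. For rays at angle $\theta=\angle(v,v')$ and minimal distance $\rho$ one computes $\int\!\!\int|z_1-z_2|^{-2}\,\rd s_1\,\rd s_2\approx \frac{C}{\sin\theta}\log(C/\rho)$ (and $\approx C/\rho$ for parallel rays), and on $\Sp^{2}$ the factor $1/\sin\angle(v,v')$ lies in $L^p(\rd v)$ only for $p<2$. It is this ray--collision degeneration, not the grazing factor, that dictates the threshold $p<2$; in the paper it appears as the bound $\opB\opA^{-1}\opB f_1\lesssim |x_\perp|^{-1}$ (Lemma~\ref{lem2.1}), an inverse distance to the incident ray, which is $L^q$ in the two-dimensional transverse variable exactly when $q<2$. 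Note also that your two claims cannot be repaired independently: if the grazing factor were genuinely present (weak-$L^2$, as you correctly compute) while the geometric factor is only $L^{2-\delta}$, H\"older would place their product merely in $L^{1-}$, far short of the assertion. The correct accounting is to delete the grazing factor, prove the $L^{p<2}$ bound for the collision singularity (Lemma~\ref{lem2.1}), handle the multiply-scattered part of $f$ through the Hardy--Littlewood--Sobolev smoothing of $\opB\opA^{-1}\opB$ (Lemma~\ref{lem1.3}) rather than a term-by-term Neumann-tail argument (your summability claim for the tail in the trace norm is also unjustified; the paper sidesteps it by writing $f_3=(\opA^{-1}\opB)^2 f$ with $\|f\|_{L^1(\Omega\times V)}\le C$), and finish with the trace estimate of Lemma~\ref{lem1.1}.
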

To prove this theorem, notice that $\alpha_3 = f_3 \big|_{(x,v)\in \Gamma_+}$, $f_3=(\opA^{-1} \opB)^2f$, and by Proposition 2.3 of \cite{ChoulliStefanov}, $\|f\|_{L^1(\Omega\times V)} \le C\|f_-\|_{L^1(\rd{\xi})}$, therefore we basically need the boundedness of $\opA^{-1}$ and $\opB\opA^{-1}\opB$ (Lemma~\ref{lem1.1} and \ref{lem1.3}). We will also show that the operator $\opB\opA^{-1}\opB$ could send $L^1$ data to $L^p$ (Lemma~\ref{lem2.2}). The results are summarized in the following few lemmas. 

	\begin{lemma}\label{lem1.1}
Let $g$ be a function defined on $\Omega\times V$. $1\le p < \infty$. Then $\exists C$ such that:
		\begin{equation*}
		\|\opA^{-1}g|_{\Gamma_+}\|_{ L^p(\rd{\xi})} \le C\|g\|_{L^p(\Omega\times V)}.
		\end{equation*}
	\end{lemma}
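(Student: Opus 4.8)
The plan is to work directly from the explicit representation of $\opA^{-1}$ recorded in the excerpt,
\[
\opA^{-1}g(x,v) = -\int_0^{\tau_-(x,v)} e^{-\int_0^t \sigma(x-sv)\rd s}\, g(x-tv,v)\,\rd t,
\]
and to establish the trace bound pointwise-in-$v$ via a change to flow-out coordinates on the boundary. First I would use admissibility: since $\sigma \ge \int k\,\rd v' \ge 0$, the damping exponential satisfies $e^{-\int_0^t\sigma(x-sv)\rd s}\le 1$, so for $(x,v)\in\Gamma_+$ one gets the clean majorization
\[
\bigl|\opA^{-1}g(x,v)\bigr| \le \int_0^{\tau_-(x,v)} |g(x-tv,v)|\,\rd t.
\]
This reduces the estimate to controlling the $L^p(\rd\xi)$ norm of a travel-time average of $|g|$ along backward characteristics.

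Next I would convert the $p$-th power of this average into a single-integral bound by Hölder's inequality, using that $\Omega$ is bounded so that $\tau_-(x,v)\le \mathrm{diam}(\Omega)=:D<\infty$ uniformly. Writing $1/p+1/p'=1$, Hölder against the constant $1$ gives
\[
\Bigl(\int_0^{\tau_-(x,v)} |g(x-tv,v)|\,\rd t\Bigr)^{p} \le \tau_-(x,v)^{p-1}\int_0^{\tau_-(x,v)} |g(x-tv,v)|^{p}\,\rd t \le D^{p-1}\int_0^{\tau_-(x,v)} |g(x-tv,v)|^{p}\,\rd t.
\]
(For $p=1$ this is an equality with constant $1$, so the argument is uniform across $1\le p<\infty$.) Integrating against the boundary measure $\rd\xi(x,v)=|n(x)\cdot v|\,\rd\mu(x)\rd v$ then leaves exactly the weighted boundary integral whose weight matches the Jacobian of the characteristic parametrization.

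The crux is the change of variables that I expect to be the main obstacle to state cleanly. For fixed $v\in V$, the map $(x,t)\mapsto y=x-tv$, with $x$ ranging over $\{x\in\partial\Omega:\ n(x)\cdot v>0\}$ and $t\in(0,\tau_-(x,v))$, is a bijection onto $\Omega$ whose Jacobian is precisely $|n(x)\cdot v|$; this is the standard foliation of $\Omega$ by lines in direction $v$, and it is exactly the factor carried by $\rd\xi$. Consequently, for each $v$,
\[
\int_{\Gamma_+}\int_0^{\tau_-(x,v)} |g(x-tv,v)|^{p}\,\rd t\,|n(x)\cdot v|\,\rd\mu(x) = \int_\Omega |g(y,v)|^{p}\,\rd y,
\]
and integrating in $v$ yields $\|g\|_{L^p(\Omega\times V)}^p$. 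Combining the three steps gives $\|\opA^{-1}g|_{\Gamma_+}\|_{L^p(\rd\xi)}^p \le D^{p-1}\|g\|_{L^p(\Omega\times V)}^p$, i.e. the claim with $C=D^{(p-1)/p}$. The only point requiring care beyond the formal computation is justifying this coordinate change rigorously (measurability and a.e.\ definedness of $\tau_-$, and that the characteristic foliation covers $\Omega$ up to a null set), which holds for a bounded Lipschitz domain and does not depend on convexity.
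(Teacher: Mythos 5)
Your proof is correct and takes essentially the same route as the paper's: bound the damping exponential by $1$, apply H\"older in $t$ with the travel-time factor (the paper's constant $\sup_{x,v}\tau_-(x,v)^{p/p'}$ is exactly your $D^{p-1}$), and finish with the characteristic change of variables in which $|n(x)\cdot v|\,\rd\mu(x)\,\rd t$ matches $\rd y$. The only difference is one of exposition: the paper compresses that change of variables into its final unexplained equality, whereas you spell out the flow-out parametrization and its Jacobian explicitly.
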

	\begin{proof}
		\begin{equation*}
		\begin{split}
		\|\opA^{-1}g|_{\Gamma_+}\|_{L^p(\rd{\xi})}^p & = \int_{\Gamma_+} \left|\int_0^{\tau_-(x,v)}e^{-\int_0^t\sigma(x-sv,v)\rd{s}}g(x-tv,v)\rd{t}\right|^p\rd{\xi(x,v)} \\
		& \le \int_{\Gamma_+} \left[\int_0^{\tau_-(x,v)}|g(x-tv,v)|\rd{t}\right]^p\rd{\xi(x,v)} \\		& \le C\int_{\Gamma_+} \int_0^{\tau_-(x,v)}|g(x-tv,v)|^p\rd{t}\rd{\xi(x,v)} \\
		& = C\|g\|_{L^p(X\times V)}^p
		\end{split}
		\end{equation*}
		where the second inequality uses H\"older inequality, and $C=\sup_{x,v}\tau_-(x,v)^{p/p'}$, $\frac{1}{p}+\frac{1}{p'}=1$.
	\end{proof}
	\begin{lemma}\label{lem1.3}
		Let $g$ be a function defined on $\Omega\times V$. Assume that $k(x,v,v')\le C_1$. If $p\ge1$ and $q<\frac{3p}{3-p}<\infty$, then
		\begin{equation*}
		\|\opB \opA^{-1} \opB g\|_{L^q(\Omega\times V)} \le C(p,q) \|g\|_{L^p(\Omega\times V)}\,.
		\end{equation*}
	\end{lemma}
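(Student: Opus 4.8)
The plan is to write $\opB\opA^{-1}\opB$ as an explicit integral operator and recognize that, after an appropriate change of variables, it is dominated by a Riesz potential of order one in $\RR^3$; the claimed mapping property then follows from Young's (equivalently Hardy--Littlewood--Sobolev) inequality on the bounded domain $\Omega$. Concretely, unwinding the three operators gives, for $(x,v)\in\Omega\times V$,
\begin{equation*}
\opB\opA^{-1}\opB g(x,v) = -\int_V\int_0^{\tau_-(x,v'')}\int_V k(x,v,v'')\,e^{-\int_0^t\sigma(x-sv'')\rd s}\,k(x-tv'',v'',v')\,g(x-tv'',v')\,\rd v'\,\rd t\,\rd v''.
\end{equation*}
Since $\sigma\ge 0$ the exponential damping is bounded by $1$, and by hypothesis $|k|\le C_1$, so pulling absolute values inside yields
\begin{equation*}
|\opB\opA^{-1}\opB g(x,v)| \le C_1^2\int_V\int_0^{\tau_-(x,v'')} G(x-tv'')\,\rd t\,\rd v'', \qquad G(y):=\int_V |g(y,v')|\,\rd v'.
\end{equation*}

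The crux is the change of variables $y=x-tv''$ for fixed $x$. Written in spherical coordinates centered at $x$, with $t=|x-y|$ the radius and $v''=(x-y)/|x-y|$ the direction, the volume element is $\rd y = t^2\,\rd t\,\rd v''$, so $\rd t\,\rd v'' = |x-y|^{-2}\rd y$, and the constraint $0<t<\tau_-(x,v'')$ confines $y$ to the backward chord of $\Omega$ through $x$. This converts the transport--scattering composition into
\begin{equation*}
|\opB\opA^{-1}\opB g(x,v)| \le C_1^2\int_\Omega \frac{G(y)}{|x-y|^2}\,\rd y,
\end{equation*}
a convolution of $G$ against the kernel $K(z)=|z|^{-2}$, which is exactly the Riesz kernel $|z|^{-(d-\alpha)}$ with $d=3$, $\alpha=1$. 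Note the right-hand side no longer depends on $v$.

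Finally I would apply Young's convolution inequality. Since $\Omega$ is bounded, $K$ may be truncated to $|z|\le \mathrm{diam}\,\Omega=:R$, and $\int_{|z|\le R}|z|^{-2r}\rd z<\infty$ precisely when $r<3/2$, so $K\mathbf{1}_{|z|\le R}\in L^r(\RR^3)$ for every $r<3/2$. Extending $G$ by zero and using $\|K*G\|_{L^{q}}\le \|K\|_{L^r}\|G\|_{L^p}$ with $1+\tfrac1q=\tfrac1r+\tfrac1p$, the admissible target exponents are exactly those with $\tfrac1q>\tfrac1p-\tfrac13$, i.e. $q<\tfrac{3p}{3-p}$; crucially this includes the endpoint $p=1$ (where $q<3/2$) uniformly, avoiding the weak-type failure of Hardy--Littlewood--Sobolev at $p=1$. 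Bounding $\|G\|_{L^p(\Omega)}\le |V|^{1-1/p}\|g\|_{L^p(\Omega\times V)}$ by Hölder in $v'$, and observing that the $v$-independent bound contributes only a factor $|V|^{1/q}$ in the $L^q(\Omega\times V)$ norm, yields the stated estimate with $C(p,q)=C_1^2\,|V|^{1-1/p+1/q}\,\|K\|_{L^r}$.

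I expect the main obstacle to be the rigorous justification of the change of variables: for a nonconvex $\Omega$ a backward ray may exit and re-enter, so one must either assume $\Omega$ convex (as is standard in this singular-decomposition setting) or absorb a bounded covering multiplicity into the constant. A secondary bookkeeping point is that for target exponents $q\le p$ (where the Young exponent $r$ would drop below $1$) one first proves the estimate for some $\tilde q\in(p,\tfrac{3p}{3-p})$ and then descends to $q$ via the bounded-domain embedding $L^{\tilde q}(\Omega)\hookrightarrow L^q(\Omega)$.
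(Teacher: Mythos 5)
Your proof is correct and structurally mirrors the paper's: both unwind $\opB\opA^{-1}\opB$ into an explicit kernel, use the polar change of variables $y=x-tv''$ (so $\rd t\,\rd v''=|x-y|^{-2}\rd y$) to expose the Riesz-type kernel $|x-y|^{-2}$, reduce to the velocity-integrated density $\tilde g(y)=\int_V |g(y,w)|\,\rd w$, and finish with a convolution estimate plus H\"older in the velocity variable. The one genuine difference is the final inequality: the paper invokes the Hardy--Littlewood--Sobolev inequality on $\Omega$, whereas you truncate the kernel to $|z|\le \mathrm{diam}\,\Omega$ (legitimate, since both points lie in the bounded domain), observe that $|z|^{-2}\mathbf{1}_{|z|\le R}\in L^r(\RR^3)$ for every $r<3/2$, and apply Young's convolution inequality. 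This is more than cosmetic: HLS requires $p>1$ strictly (at $p=1$ only a weak-type bound holds), so the paper's proof as written does not literally cover the endpoint $p=1$ that the lemma asserts --- and that endpoint is precisely what is used downstream, since Lemma~\ref{lem2.2} applies this lemma starting from $\|f\|_{L^1(\Omega\times V)}\le C$. Your Young-inequality route covers $p=1$ uniformly with no extra work, so it is the tighter argument. Two minor remarks: your worry about covering multiplicity for nonconvex $\Omega$ is unnecessary, because the map $(t,v'')\mapsto x-tv''$ restricted to $\{0<t<\tau_-(x,v'')\}$ is injective with image contained in the paper's set $\Omega_x\subset\Omega$, and one simply bounds the integral over $\Omega_x$ by the integral over $\Omega$; and your descent from some $\tilde q\in(p,\tfrac{3p}{3-p})$ to $q\le p$ via the bounded-domain embedding $L^{\tilde q}(\Omega\times V)\hookrightarrow L^{q}(\Omega\times V)$ is the right fix for the range where the Young exponent would drop below $1$.
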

	\begin{proof}
	\begin{equation} \label{eqn:716}
	\begin{split}
	(\opB \opA^{-1} \opB g)(x,v) 
	= & -\int_V k(x,v',v)\int_0^{\tau_-(x,v')}e^{-\int_0^t\sigma(x-sv')\rd{s}}(\opB g)(x-tv',v')\rd{t}\rd{v'} \\
	= & - \int_{\Omega_x}k(x,v',v)e^{-\int_0^t\sigma(x-sv')\rd{s}}(\opB g)(y,v')t^{-2}\rd{y} \\
	= & - \int_{\Omega_x}k(x,v',v)e^{-t\int_0^1\sigma((1-s')x+s'y)\rd{s'}}\int_V k(y,w,v')g(y,w)\rd{w} t^{-2}\rd{y} \\
	= & \int_\Omega\int_V K_1(x,v,y,w)g(y,w) \rd{w}\rd{y}
	\end{split}
	\end{equation}
	with the change of variable
	\begin{equation*}
	y = x-tv', \quad  \rd{y} = t^2\rd{t}\rd{v'},\quad t=|x-y|,\quad v' = \frac{x-y}{|x-y|}
	\end{equation*}
and $\Omega_x$, the integration domain of $y$, is the set of $y\in\Omega$ such that the segment from $x$ to $y$ is contained in $\Omega$.

	The integral kernel $K_1$ is given by
	\begin{equation*}
	K_1(x,v,y,w) = {\bf 1}_{\Omega_x}(y)k(x,v',v)e^{-t\int_0^1\sigma((1-s')x+s'y)\rd{s'}}k(y,w,v') |x-y|^{-2}\,.
	\end{equation*}
	Thus, by the assumption that $k\in L^\infty$, one has
		\begin{equation} \label{bound_K}
		|K_1(x,v,y,w)|\le C|x-y|^{-2}\,.
		\end{equation}
	Using this estimate, we can finish the proof by the Hardy-Littlewood-Sobolev inequality:
		\begin{equation*}
		\begin{split}
		\|\opB \opA^{-1} \opB g\|_{L^q(X\times V)}^q & \le C\|\,|x|^{-2}*_{x,v}g\|_{L^q(X\times V)}^q \\
		& = C\int_{\Omega\times V}\left|\int_{\Omega\times V}|x-y|^{-2}g(y,w)\rd{y}\rd{w}\right|^q\rd{x}\rd{v} \\
		& \le C\int_\Omega\left|\int_\Omega|x-y|^{-2}\tilde{g}(y)\rd{y}\right|^q\rd{x} \\
		& = C\|\,|x|^{-2}*_x\tilde{g}\|_{L^q(\Omega)}^q \\
		& \le C\|\tilde{g}\|_{L^p(\Omega)}^q \,,
		\end{split}
		\end{equation*}
		where $\tilde{g}(x)=\int_V g(x,v)\rd{v}$, and the last inequality uses the HLS inequality in $\Omega$, hereby imposing the restrictions on $p$ and $q$. Then notice from the H\"older inequality
		\begin{equation*}
		\|\tilde{g}\|_{L^p(\Omega)}^p = \int_\Omega \left|\int_V g(x,v)\rd{v} \right|^p\rd{x} \le C\int_\Omega\int_V|g|^p\rd{v}\rd{x} = C\|g\|_{L^p(\Omega\times V)}^p \,,
		\end{equation*}
		the result directly follows. 
	\end{proof}
		
	\begin{lemma}\label{lem2.1}
		Let $f_1$ be defined in \eqref{eqn:f1}, then for $p<2$
		\begin{equation*}
		\|\opB \opA^{-1} \opB f_1\|_{L^p(\Omega\times V)}\le C_p\,.
		\end{equation*}
	\end{lemma}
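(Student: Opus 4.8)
The plan is to feed the explicit measure $f_1$ from \eqref{eqn:f1} into the kernel representation of $\opB\opA^{-1}\opB$ already established in the proof of Lemma~\ref{lem1.3}. Recall from \eqref{eqn:716} that for any input $g$ one has $(\opB\opA^{-1}\opB g)(x,v)=\int_\Omega\int_V K_1(x,v,y,w)g(y,w)\,\rd w\,\rd y$, with the pointwise bound $|K_1(x,v,y,w)|\le C|x-y|^{-2}$ from \eqref{bound_K}. The key observation is that $f_1$ in \eqref{eqn:f1} is a line measure in the spatial variable times a velocity delta: it carries $\delta(v-v')$ and, in its spatial argument, is supported on the source ray $\{x'+\ell v':\ell\in[0,\tau_+(x',v')]\}$. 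Substituting $f_1$ into the kernel representation, the velocity integral collapses the factor $\delta(w-v')$ (fixing $w=v'$) while the spatial integral collapses the transport delta into an integral in arc length $\ell$ along the source ray. Since both exponential factors and $|n(x')\cdot v'|$ are bounded and $k\le C_1$, the bound \eqref{bound_K} yields the $v$-independent pointwise estimate $|(\opB\opA^{-1}\opB f_1)(x,v)|\le C\int_0^{\tau_+(x',v')}|x-x'-\ell v'|^{-2}\,\rd\ell=:C\,h(x)$.

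It then remains to show $h\in L^p(\Omega)$ for $p<2$, with a constant independent of the source $(x',v')$. Writing $d(x)=\mathrm{dist}(x,\{x'+\ell v':\ell\in\RR\})$ and splitting $x-x'-\ell v'$ into its components parallel and perpendicular to $v'$, one has $|x-x'-\ell v'|^2=d(x)^2+(\ell-\ell_0(x))^2$, where $\ell_0(x)$ is the foot of the perpendicular; enlarging the integration range to all of $\RR$ gives $h(x)\le\int_\RR\frac{\rd\ell}{d(x)^2+(\ell-\ell_0)^2}=\frac{\pi}{d(x)}$. Hence $\|h\|_{L^p(\Omega)}^p\le\pi^p\int_\Omega d(x)^{-p}\,\rd x$. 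Introducing cylindrical coordinates $(r,\theta,\zeta)$ adapted to the line, with $r=d(x)$ and $\rd x=r\,\rd r\,\rd\theta\,\rd\zeta$, the integrand becomes $r^{1-p}$, which is integrable near $r=0$ exactly when $p<2$; the boundedness of $\Omega$ controls the remaining directions and the region away from the ray, bounding the integral by a constant depending only on $\mathrm{diam}(\Omega)$ and $p$, uniformly over all rays. Since $|V|<\infty$ and the pointwise bound is $v$-independent, $\|\opB\opA^{-1}\opB f_1\|_{L^p(\Omega\times V)}^p\le |V|\,C^p\|h\|_{L^p(\Omega)}^p\le C_p$, which is the claim.

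The main obstacle is conceptual rather than computational: one must justify inserting the singular, measure-valued $f_1$ into the kernel identity \eqref{eqn:716}, which was derived for genuine functions $g$. This can be handled either by substituting the delta functions directly and reading the result as a one-dimensional arc-length integral, or by approximating $f_1$ with a mollified source and passing to the limit using the uniform bound \eqref{bound_K}. A complementary viewpoint, useful as a sanity check, is that $\opA^{-1}\opB f_1=-f_2$, and the outer $\opB$ integrates over the intermediate scattering direction; it is precisely this angular integration that turns the surface-concentrated $f_2$ into an honest $L^p$ function, mirroring the change of variables $y=x-tv'$ used in \eqref{eqn:716}. I also expect the threshold $p<2$ to be sharp, since $h(x)\sim 1/d(x)$ near the ray and $1/d$ fails to be square integrable around a line in $\RR^3$.
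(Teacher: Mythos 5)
Your proposal is correct and follows essentially the same route as the paper's proof: both insert $f_1$ into the kernel representation \eqref{eqn:716}, use the bound \eqref{bound_K} to collapse the deltas into an arc-length integral $C\int_0^{\tau_+(x',v')}|x-(x'+t v')|^{-2}\,\rd t$, extend the $t$-integral to all of $\RR$ to obtain the bound $\pi/|x_\perp|$ (your $\pi/d(x)$), and then conclude by integrability of $|x_\perp|^{-p}$ near a line in $\RR^3$, which holds exactly for $p<2$. Your additional remarks on justifying the substitution of the measure-valued $f_1$ (via mollification) and on the sharpness of the threshold are sound refinements of the same argument, not a different approach.
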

	\begin{proof}
		Recall $f_1$ 
		\begin{equation*}
		f_1(x,v) = |n(x')\cdot v'|\int_0^{\tau_+(x',v')}e^{-\int_0^{\tau_-(x,v)}\sigma(x-sv)\rd{s}}\delta(x-x'-tv)\delta(v-v')\rd{t}\,,
		\end{equation*}
		Then
		\begin{equation*}
		\begin{split}
		|\opB \opA^{-1} \opB f_1(x,v) | & \le C\int_\Omega\int_V\int_0^{\tau_+(x',v')}|x-y|^{-2}\delta(y-x'-tw)\delta(w-v')\rd{t}\rd{w}{\rd{y}} \\
		& = C\int_0^{\tau_+(x',v')} |x-(x'+tv')|^{-2}\rd{t}
		\end{split}
		\end{equation*}
	thanks to \eqref{eqn:716} and \eqref{bound_K}.

		For any $x\in X$, write the parallel and perpendicular component of $x$ with respect to $v'$ as
		\begin{equation*}
		x = x' + x_\parallel + x_\perp,\qquad \text{with} \quad x_\parallel = ((x-x')\cdot v')v'\,,
		\end{equation*}
		then one sees that 
		\begin{equation*}
		\begin{split}
		\int_{-\infty}^\infty |x-(x'+tv')|^{-2}\rd{t} & = \int_{-\infty}^\infty (|x_\parallel-tv'|^2 + |x_\perp|^2)^{-1}\rd{t} = \int_{-\infty}^\infty(||x_\parallel|-t|^2 + |x_\perp|^2)^{-1}\rd{t} \\
		& = \int_{-\infty}^\infty(|t|^2 + |x_\perp|^2)^{-1}\rd{t} = \pi |x_\perp|^{-1}\,.
		\end{split}
		\end{equation*}
		Therefore, for any $q<2$,
		\begin{equation*}
		\|\opB \opA^{-1} \opB f_1\|_{L^q}^q \le C\int_\Omega\int_V |x_\perp|^{-q}\rd{v}\rd{x} = C\int_{-R}^R\int_{|x_\perp|\le R}|x_\perp|^{-q}\rd{x_\perp}\rd{|x_\parallel|} \le C \,,
		\end{equation*}
		where $R$ is the diameter of $\Omega$, since $x_\perp$ lives in a 2d space.
	\end{proof}
	\begin{lemma}\label{lem2.2}
		Let $f$ be the solution to \eqref{eqn:000} with incoming data \eqref{eqn:f-}, then for $p< 2$ there is $C$ such that
		\begin{equation*}
		\|\opB \opA^{-1} \opB f\|_{L^p(\Omega\times V)} \le C \,.
		\end{equation*}
	\end{lemma}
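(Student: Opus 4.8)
The plan is to exploit linearity together with the splitting $f=f_1+f_2+f_3$ from \eqref{eqn:decomp1}, so that
\[
\opB\opA^{-1}\opB f=\opB\opA^{-1}\opB f_1+\opB\opA^{-1}\opB f_2+\opB\opA^{-1}\opB f_3,
\]
and to bound each summand in $L^p(\Omega\times V)$ for every $p<2$. The whole argument is driven by Lemma~\ref{lem2.1}, which already controls the most singular piece $\opB\opA^{-1}\opB f_1$; the other two terms will be reduced to it. Apart from Lemma~\ref{lem2.1}, the only auxiliary facts I will use are that $\opB$ is bounded on $L^p(\Omega\times V)$ (immediate from $k\in L^\infty$ and the compactness of $V$, via the same H\"older step appearing at the end of Lemma~\ref{lem1.3}) and that $\opA^{-1}$ is bounded on $L^p(\Omega\times V)$ (the interior version of Lemma~\ref{lem1.1}, whose proof applies verbatim by integrating over $\Omega\times V$ instead of taking the trace on $\Gamma_+$). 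A point to keep in mind throughout is that $f_1$ and $f_2$ are singular measures, not $L^p$ functions; they are never estimated on their own, and only the already-smoothed combinations, which always carry the factor $\opB\opA^{-1}\opB f_1$, are placed in $L^p$.

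The first term is exactly Lemma~\ref{lem2.1}. For the second I use the identity $f_2=-\opA^{-1}\opB\,\opJ f_-=-\opA^{-1}\opB f_1$ read off from \eqref{eqn:decomp1}, which gives $\opB f_2=-\opB\opA^{-1}\opB f_1$; by Lemma~\ref{lem2.1} this lies in $L^p$ for $p<2$. Applying $\opA^{-1}$ and then $\opB$, both bounded on $L^p$, yields $\opB\opA^{-1}\opB f_2=\opB\opA^{-1}\bigl(\opB f_2\bigr)\in L^p(\Omega\times V)$ for every $p<2$, with no estimate on $f_2$ itself ever needed.

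The third term is the delicate one. Writing $f_3=(I+\opA^{-1}\opB)^{-1}(\opA^{-1}\opB)^2 f_1$ as in \eqref{eqn:decomp1}, I first observe
\[
(\opA^{-1}\opB)^2 f_1=\opA^{-1}\bigl(\opB\opA^{-1}\opB f_1\bigr)\in L^p(\Omega\times V),\qquad p<2,
\]
again by Lemma~\ref{lem2.1} followed by the boundedness of $\opA^{-1}$. It then remains to pass this through the resolvent $(I+\opA^{-1}\opB)^{-1}$. For this I will invoke that, since $(\sigma,k)$ is admissible, the forward problem \eqref{eqn:000} is well-posed on every $L^p$, so $(I+\opA^{-1}\opB)^{-1}$ is bounded on $L^p(\Omega\times V)$; concretely this is obtained by interpolating the $L^1$ bound of Proposition~2.3 of \cite{ChoulliStefanov} against the $L^\infty$ bound furnished by the subcritical condition $\sigma-\int k\,\rd v'\ge\nu>0$. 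Hence $f_3\in L^p$ for $p<2$, and so $\opB\opA^{-1}\opB f_3\in L^p$ by boundedness of $\opB$ and $\opA^{-1}$. Summing the three contributions proves the claim.

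I expect the resolvent step to be the main obstacle. Unlike $f_1$, the remainder $f_3$ is not accessible through the explicit singular formulas of Theorem~\ref{thm:f}, which only place it in $\mathcal{W}\subset L^1$; and a single factor $\opA^{-1}\opB$ does not improve spatial integrability, since only the paired operator $\opB\opA^{-1}\opB$ gains integrability, through the Hardy--Littlewood--Sobolev step in Lemma~\ref{lem1.3}. Consequently $f_3$ cannot be bootstrapped into $L^p$ with $p<2$ by integrability gains alone, and it is precisely the $L^p$-boundedness of $(I+\opA^{-1}\opB)^{-1}$ (equivalently, $L^p$ well-posedness of the transport problem under admissibility) that lets the estimate close at the full range $p<2$ rather than merely $p<3/2$.
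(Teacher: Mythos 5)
Your treatment of the first two terms coincides with the paper's own proof: $\opB\opA^{-1}\opB f_1$ is exactly Lemma~\ref{lem2.1}, and $\opB\opA^{-1}\opB f_2=-\opB\opA^{-1}\left(\opB\opA^{-1}\opB f_1\right)$ is then handled by the $L^p$-boundedness of $\opA^{-1}$ and $\opB$. The gap is in the third term. Your argument hinges on the $L^p(\Omega\times V)$-boundedness of the resolvent $(I+\opA^{-1}\opB)^{-1}$ for $p$ up to $2$, and the justification you give --- interpolating the $L^1$ bound of Proposition 2.3 of \cite{ChoulliStefanov} against an $L^\infty$ bound --- does not typecheck: Proposition 2.3 bounds the boundary-to-solution map $f_-\mapsto (I+\opA^{-1}\opB)^{-1}\opJ f_-$ from $L^1(\Gamma_-,\rd\xi)$ into $L^1(\Omega\times V)$, which is a different operator, with a different domain, from the interior resolvent $(I+\opA^{-1}\opB)^{-1}:L^1(\Omega\times V)\to L^1(\Omega\times V)$; you cannot interpolate one operator's endpoint with another's. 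Moreover, an honest $L^1$ resolvent bound (the endpoint your interpolation needs, since $p<2$ requires an exponent at or near $1$) would come from a contraction estimate for $\opA^{-1}\opB$ on $L^1$, which by the usual kernel computation is governed by $\sup_{x,v'}\int k(x,v,v')\,\rd v$ --- integration in the \emph{other} velocity slot than the one appearing in the paper's subcriticality assumption $\sigma-\int k(x,v,v')\,\rd v'\ge\nu$. The assumed condition yields the $L^\infty$ endpoint but not the $L^1$ one, so the step ``$(I+\opA^{-1}\opB)^{-1}$ is bounded on $L^p$, $p<2$'' is unsupported as written.

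The paper avoids the resolvent altogether, and your closing remark --- that without it the smoothing route would stall at $p<3/2$ --- is precisely where you miss the trick. Since $(\opA^{-1}\opB)^2$ commutes with $(I+\opA^{-1}\opB)^{-1}$, one has $f_3=(\opA^{-1}\opB)^2 f=\opA^{-1}\left(\opB\opA^{-1}\opB f\right)$ with $f$ the \emph{full} solution, and for $f$ (unlike for the resolvent) the bound $\|f\|_{L^1(\Omega\times V)}\le C$ \emph{is} what Proposition 2.3 provides. Lemma~\ref{lem1.3} with input exponent $1$ gives $\opB\opA^{-1}\opB f\in L^q$ for $q<3/2$, hence $f_3\in L^q$ for $q<3/2$ after applying the bounded operator $\opA^{-1}$. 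That this falls short of $2$ is irrelevant, because the quantity to be estimated is $\opB\opA^{-1}\opB f_3$, and a \emph{second} application of Lemma~\ref{lem1.3} upgrades $L^q$, $q<3/2$, to $L^p$ for all $p<3q/(3-q)$, i.e. for all $p<3$ --- more than enough to cover $p<2$. So the double use of the Hardy--Littlewood--Sobolev smoothing closes the argument with no resolvent bound at all; your proof is repaired by replacing your representation of $f_3$ with this one.
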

	\begin{proof}
		From the previous lemma, we know that $\|\opB \opA^{-1} \opB f_1\|_{L^p(\Omega\times V)}\le C$ for any $p<2$. Then notice that $\opA^{-1}$ and $\opB$ are bounded operators on $L^p$ (which is obvious from their explicit expressions). Then, since $\opB \opA^{-1} \opB f_2 = -\opB \opA^{-1} \opB(\opA^{-1} \opB f_1)=-\opB \opA^{-1} (\opB\opA^{-1} \opB f_1)$, one gets $\|\opB\opA^{-1} \opB f_2\|_{L^p(\Omega\times V)}\le C$. Finally, from the fact that $\|f\|_{L^1(\Omega\times V)}\le C$, one gets $\|f_3\|_{L^q(\Omega\times V)} = \|\opA^{-1}\opB\opA^{-1} \opB f\|_{L^q(\Omega\times V)} \le C$ by Lemma \ref{lem1.3}, if $q<3/2$. Then using Lemma \ref{lem1.3} again give $\|\opB\opA^{-1} \opB f_3\|_{L^p(\Omega\times V)}\le C$ for any $p<3$. 
	\end{proof}
	Finally, given the fact that $f_3 = (\opA^{-1} \opB)^{2} f$, Lemma \ref{lem1.1} and Lemma \ref{lem2.2} imply Proposition \ref{thm1}. 
The proof of Theorem~\ref{thm:separation} is now in order. 

	\begin{proof}[Proof of Theorem~\ref{thm:separation}]
	Using (\ref{alpha1}), one can see that
	\begin{equation} \label{eqn:error_1}
	\begin{split}
	E_1 = & \int_{\Gamma_-}\int_{\Gamma_+}  \frac{|n(x') \cdot v'|}{n(x)\cdot v}    e^{-\int_0^{\tau_-(x,v)} \sigma(x-sv) ds} \delta(x-x'-\tau_+(x',v')v') \delta(v-v')\\
	&\qquad  \psi \left(\frac{|x'-x_0|}{\eps} \right)\psi \left(\frac{|v'-v_0|}{\eps} \right)\psi \left(\frac{|x-(x_0+\tau_+(x_0,v_0)v_0) |)}{\eps_1} \right)\psi \left(\frac{|v-v_0|}{\eps_1}\right)\rd{\xi(x,v)}\rd{\xi(x',v')}\\
	=&  \int_{\Gamma_-}  |n(x')\cdot v'|e^{-\int_0^{\tau_-((x'+\tau_+(x',v')v'),v')}\sigma((x'+\tau_+(x',v')v')-pv')\rd{p}}  \psi \left(\frac{|x'-x_0|}{\eps} \right)\psi \left(\frac{|v'-v_0|}{\eps} \right)\\
	& \qquad \psi \left(\frac{|(x'+\tau_+(x',v')v')-(x_0+\tau_+(x_0,v_0)v_0)|}{\eps_1} \right) \psi\left(\frac{|v'-v_0|}{\eps_1} \right) \rd{\xi(x',v')}  \,.
	\end{split}
	\end{equation}
	Due to the Lipschitz continuity of $\tau_+$, there exists a small constant $c<\frac{1}{2}$ such that $|x'-x_0|<c\eps_1,|v'-v_0|<c\eps_1$ implies $|(x'+\tau_+(x',v')v')-(x_0+\tau_+(x_0,v_0)v_0|<\frac{1}{2}\eps_1$. Also, since $\tau_-$ and $\sigma$ have upper bounds, the exponential term has a lower bound. Thus 
	\begin{equation*}
	|E_1| \ge \int_{|x'-x_0|<c\eps_1,|v'-v_0|<c\eps_1}\rd{\xi(x',v')} \ge c\eps_1^4\,.
	\end{equation*}
	
	To estimate $E_2$, one uses (\ref{alpha2}) to get
	\begin{equation*}
	\begin{split}
	|E_2| = & \int_{\Gamma_-}\int_{\Gamma_+}   \int_0^{\tau_+(x',v')} \frac{|n(x') \cdot v'|}{n(x)\cdot v}    e^{-\int_0^{\tau_+(x'+tv',v)} \sigma(x-pv)dp} 
	e^{-\int_0^t \sigma(x+pv')dp}k(x+tv',v',v) \\ 
	&  \qquad \delta(x-x'-tv'-\tau_+(x'+tv',v)) \rd t  \psi \left(\frac{|x'-x_0|}{\eps} \right)\psi \left(\frac{|v'-v_0|}{\eps} \right)  \\
	& \qquad  \psi \left(\frac{|x-(x_0+\tau_+(x_0,v_0)v_0|)}{\eps_1} \right)\psi \left(\frac{|v-v_0|}{\eps_1} \right)\rd{\xi(x,v)}\rd{\xi(x',v')}\\	
	\le & C\sup_t\int_{\Gamma_-}\int_{V}  \frac{|n(x')\cdot v'||n(x'+tv'+\tau_+(x'+tv',v)v) \cdot v|}{n(x'+tv'+\tau_+(x',+tv',v)v)\cdot v}     \psi \left(\frac{|x'-x_0|}{\eps} \right)\psi \left(\frac{|v'-v_0|}{\eps} \right)   \\
	& \qquad \psi \left(\frac{|(x'+tv'+\tau_+(x'+tv',v)v)-(x_0+\tau_+(x_0,v_0)v_0)|)}{\eps_1} \right)  \psi \left(\frac{|v-v_0|}{\eps_1} \right) \rd{v}\rd{\xi(x',v')}\\
	\le & C\int_{|x'-x_0|<\eps,|v'-v_0|<\eps,|v-v_0|<\eps_1}\rd{v}\rd{\xi(x',v')} \le C\eps^4 \eps_1^2 \,,
	\end{split}
	\end{equation*}
	where in the first inequality we bound the exponential terms and the $k$ term by $C$ and then integrate out the $x$ variable.
	
	For $E_3$,  we have  
	\begin{equation*}
	\begin{split}
	|E_3| \le &  \int_{|x'-x_0|<\eps,|x-(x_0+\tau_+(x_0,v_0)v_0|<\eps_1,|v'-v_0|<\eps,|v-v_0|<\eps_1}\alpha_3(x,v; x',v')\rd{\xi(x,v)}\rd{\xi(x',v')} \\
	\le & \int_{|x'-x_0|<\eps,|v'-v_0|<\eps}\|\alpha_3(\cdot,\cdot;x',v')\|_{L^p(\rd{\xi})}\|{\bf 1}_{|x-(x_0+\tau_+(x_0,v_0)v_0|<\eps_1,|v-v_0|<\eps_1}\|_{L^{p'}(\rd{\xi})}\rd{\xi(x',v')} \\
	\le &  C_p\eps^{4/p'}\int_{|x'-x_0|<\eps,|v'-v_0|<\eps}\rd{\xi(x',v')} \\
	\le & C_p\eps_1^{4/p'} \eps^4\,,
	\end{split}
	\end{equation*}
	thanks to Proposition \ref{thm1} and H\"older inequality. Notice that $4/p'=2-\delta$ if $p=\frac{2}{1+\delta/2}<2$, and \eqref{eqn:E123} directly follows. To go from \eqref{eqn:E123} to \eqref{eqn:phi123}, one just needs to notice that $\phi_1 = \lim_{\eps \rightarrow 0, \eps_1 \rightarrow 0} E_1$ and $\phi_{R,1} = \lim_{\eps \rightarrow 0, \eps_1 \rightarrow 0} E$.
	\end{proof}

\subsection{Study of $\Sigma-\sigma$}
We study the error in the final recovery. Comparing~\eqref{eqn:opt-cont1} and~\eqref{eqn:opt-dis1-regularizing}, we see that the true media $\sigma$ minimizes:
\begin{equation*}
\min_{\sigma} \mathcal{F}\left(\mathcal{R}[\sigma](x,v) - {a}(x,v) \right)\,,
\end{equation*}
or directly:
\begin{equation*}
\mathcal{R}[\sigma](x,v) = {a}(x,v)\,,
\end{equation*}
while the numerical recovery $\Sigma$ satisfies:
\begin{equation*}
\min_{\Sigma} \|  \Rmat \cdot \Sigma - \vec{a} \|_X + \lambda\| \Sigma\|_Y\,,
\end{equation*}
with $a$ and $\vec{a}$ defined in~\eqref{eqn:def_cont_a} and~\eqref{eqn:def_dis_a}.

The difference between $\mathcal{R}[\sigma]$ and $\Rmat\cdot\Sigma$ is governed by the accuracy of the quadrature rule. Suppose the second order trapezoidal rule is used to approximate the line integral of $\mathcal{R}$, and then the truncation error is given by, for each experiment:
\begin{equation}\label{eqn:trapezoidal}
\mathcal{R}[\sigma](x^{(k)},v^{(k)}) - (\Rmat\cdot\sigma^\text{dis}_a)_k = \mathcal{O}(\Delta x^2)\,.
\end{equation}

The difference between $a$ and $\vec{a}$, according to the definition, is from the error in $\phi_1$:
\begin{lemma}\label{lemma:a}
With incoming data given by $f_-(x',v') = \psi \left(\frac{|x'-x^{(j)}|}{\eps} \right)\psi \left(\frac{|v'-v^{(j)}|}{\eps} \right)$, the analytical $a(x,v)$ and the discrete $\vec{a}$ differ by $\mathcal{O}(\eps_1^{-2-\delta}\eps^4)$ (with arbitrary small $\delta>0$), i.e., 
\begin{equation*}
a(x^{(j)}_*, v^{(j)}_*)-\vec{a}_j= \ln \left(  \frac{f_-(x^{(j)}, v^{(j)})}{\phi_1(x^{(j)}_*, v^{(j)}_*)}   \right)- \ln \left( \frac{\vec{f}_-(x^{(j)}, v^{(j)})}{\phi_{R,1}(x^{(j)}_*, v^{(j)}_*)}   \right) = \mathcal{O}(\eps_1^{-2-\delta}\eps^4)\,,
\end{equation*}
where $(x^{(j)}, v^{(j)}) \in \Gamma_-$, and $(x^{(j)}_*, v^{(j)}_*) \in \Gamma_+$ is defined in \eqref{xv_star}, both of them are on the grid points. 
\end{lemma}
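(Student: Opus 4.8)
The plan is to reduce the claimed estimate directly to the relative-error bound \eqref{eqn:phi123} already furnished by Theorem~\ref{thm:separation}. First I would observe that the two numerators appearing in $a$ and $\vec a_j$ are in fact identical. Both the continuous source $f_-$ and its sampled counterpart $\vec f_-$ are evaluated at the \emph{grid point} $(x^{(j)},v^{(j)})$, which is precisely the center of the concentrating profile $f_-(x',v') = \psi(|x'-x^{(j)}|/\eps)\psi(|v'-v^{(j)}|/\eps)$; since $\psi(0)=1$, both numerators equal $1$. Consequently the two logarithms share a common numerator that cancels exactly, and the difference collapses to
\begin{equation*}
a(x^{(j)}_*,v^{(j)}_*) - \vec a_j = \ln\!\left(\frac{\phi_{R,1}(x^{(j)}_*,v^{(j)}_*)}{\phi_1(x^{(j)}_*,v^{(j)}_*)}\right)\,.
\end{equation*}

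Next I would turn this quotient into a relative-error estimate for $\phi_{R,1}$. Writing $r := (\phi_{R,1}-\phi_1)/\phi_1$ evaluated at $(x^{(j)}_*,v^{(j)}_*)$, the ratio is $1+r$, and Theorem~\ref{thm:separation}, specifically \eqref{eqn:phi123}, gives $|r| = \mathcal{O}(\eps_1^{-2-\delta}\eps^4)$ for any $\delta>0$. It then remains to linearize the logarithm: for $|r|$ small one has $\ln(1+r) = r + \mathcal{O}(r^2)$, whence
\begin{equation*}
a(x^{(j)}_*,v^{(j)}_*) - \vec a_j = \ln(1+r) = \mathcal{O}(\eps_1^{-2-\delta}\eps^4)\,,
\end{equation*}
which is exactly the assertion.

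The step that requires the most care, and which I expect to be the main obstacle, is justifying the linearization of $\ln$. Two things must be checked. On one hand, $\phi_1(x^{(j)}_*,v^{(j)}_*)$ must be bounded away from zero, so that the logarithm is well defined and division by $\phi_1$ in $r$ is harmless; this follows from the explicit, strictly positive form of $\alpha_1$ in \eqref{alpha1} (a delta against a positive exponential), whose exponential factor is bounded below because $\sigma$ and $\tau_-$ are bounded, consistent with the lower bound $E_1 \ge c\eps_1^4$ derived in the proof of Theorem~\ref{thm:separation}. On the other hand, the expansion $\ln(1+r)=r+\mathcal{O}(r^2)$ is legitimate only for $|r|<1$, i.e.\ in the asymptotic regime where the source is much more concentrated than the measurement window so that $\eps_1^{-2-\delta}\eps^4\to 0$; in that regime the quadratic remainder $\mathcal{O}(r^2)$ is of strictly higher order and is absorbed into the stated bound $\mathcal{O}(\eps_1^{-2-\delta}\eps^4)$. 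With these two observations in place, the estimate follows.
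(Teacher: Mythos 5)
Your proposal is correct and follows essentially the same route as the paper, whose proof simply declares the lemma an immediate consequence of Theorem~\ref{thm:separation} and the definition of $a$: you cancel the common numerators, reduce the difference to $\ln\left(\phi_{R,1}/\phi_1\right)$, and invoke the relative-error bound \eqref{eqn:phi123} together with the linearization of the logarithm. The two points you flag --- positivity of $\phi_1$ and smallness of the relative error so that $\ln(1+r)=r+\mathcal{O}(r^2)$ applies --- are exactly the details the paper leaves implicit, so your write-up is a faithful (and more careful) expansion of the paper's argument.
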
 
\begin{proof}
It is a immediate consequence of Theorem~\ref{thm:separation} and the definition of $a$.
\end{proof}

We then have the following theorem.
\begin{theorem} \label{error-sigmaa}
	If the norms in \eqref{eqn:opt-dis1-regularizing} are both taken as the $L^2$ norm and assume the range condition: there exists a vector $z$ such that
	\begin{equation}\label{eqn:range_condition}
	\sigma^{dis} = \Rmat^T z \,.  
	\end{equation}
	Then by choosing
	\begin{equation}\label{eqn:lambda_choice}
	\lambda = \frac{\eps_1^{-2-\delta} \eps^4+\Delta x^2}{\|z\|_2}
	\end{equation}
	in \eqref{eqn:opt-dis1-regularizing} with $\delta>0$, one has the error estimate
	\begin{equation}\label{eqn:sigma_error}
	\|\Sigma-\sigma^{dis}\|_2 \le C\|z\|_2^{1/2}(\eps_1^{-2-\delta} \eps^4 + \Delta x^2)^{1/2}  \,.
	\end{equation}
\end{theorem}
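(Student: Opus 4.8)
The plan is to recognize \eqref{eqn:sigma_error} as the classical Tikhonov error estimate under a source (range) condition, and to carry it out in three stages: first show that the true discrete coefficient is an almost-solution of the regularized system; then exploit minimality of $\Sigma$ together with the range condition \eqref{eqn:range_condition} via a completion-of-squares; and finally balance the resulting terms using the prescribed choice \eqref{eqn:lambda_choice} of $\lambda$. Throughout I write $\eta := \eps_1^{-2-\delta}\eps^4 + \Dx^2$ and $e := \Sigma - \sigma^{dis}$, and I read \eqref{eqn:opt-dis1-regularizing} as the standard quadratic Tikhonov functional $\|\Rmat\cdot\sigma - \vec a\|_2^2 + \lambda\|\sigma\|_2^2$, whose (unique, by convexity) minimizer is $\Sigma$.

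First I would control the residual of the exact coefficient. The continuous identity \eqref{eqn:x_ray} gives $\mathcal{R}[\sigma] = a$ \emph{exactly} at every measurement coordinate $(x^{(j)}_*,v^{(j)}_*)$; the trapezoidal estimate \eqref{eqn:trapezoidal} then replaces $\mathcal{R}[\sigma]$ by its discretization $\Rmat\cdot\sigma^{dis}$ at a cost of $\order(\Dx^2)$ per experiment, while Lemma~\ref{lemma:a} replaces $a$ by $\vec a$ at a cost of $\order(\eps_1^{-2-\delta}\eps^4)$ per experiment. Adding the two entrywise and summing over the fixed number of experiments yields
\begin{equation*}
\|\Rmat\cdot\sigma^{dis} - \vec a\|_2 \le C\eta\,,
\end{equation*}
so $\sigma^{dis}$ is feasible up to $\order(\eta)$.

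Next comes the core estimate. By minimality, $\|\Rmat\Sigma - \vec a\|_2^2 + \lambda\|\Sigma\|_2^2 \le \|\Rmat\sigma^{dis} - \vec a\|_2^2 + \lambda\|\sigma^{dis}\|_2^2 \le C\eta^2 + \lambda\|\sigma^{dis}\|_2^2$. Writing $\|\Sigma\|_2^2 - \|\sigma^{dis}\|_2^2 = \|e\|_2^2 + 2\langle \sigma^{dis}, e\rangle$, I would invoke the range condition \eqref{eqn:range_condition}: with $\sigma^{dis} = \Rmat^T z$ the cross term becomes $\langle \sigma^{dis}, e\rangle = \langle z, \Rmat e\rangle$, and $\Rmat e = (\Rmat\Sigma - \vec a) - (\Rmat\sigma^{dis} - \vec a)$. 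Setting $r := \Rmat\Sigma - \vec a$ and completing the square, $\|r\|_2^2 + 2\lambda\langle z, r\rangle = \|r + \lambda z\|_2^2 - \lambda^2\|z\|_2^2 \ge -\lambda^2\|z\|_2^2$, so the sign-indefinite cross term is absorbed. Bounding the remaining piece by $2\lambda\langle z, \Rmat\sigma^{dis} - \vec a\rangle \le 2C\lambda\eta\|z\|_2$ leaves
\begin{equation*}
\lambda\|e\|_2^2 \le C\eta^2 + \lambda^2\|z\|_2^2 + 2C\lambda\eta\|z\|_2\,.
\end{equation*}

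Finally I would insert $\lambda = \eta/\|z\|_2$ from \eqref{eqn:lambda_choice}: each of the three terms on the right is then a constant multiple of $\lambda\eta\|z\|_2 = \eta^2$, so after dividing by $\lambda$ one obtains $\|e\|_2^2 \le C\eta\|z\|_2$, that is $\|\Sigma - \sigma^{dis}\|_2 \le C\|z\|_2^{1/2}\eta^{1/2}$, which is \eqref{eqn:sigma_error}. The hard part is the completion-of-squares powered by the range condition \eqref{eqn:range_condition}: without it the term $2\lambda\langle \sigma^{dis}, e\rangle$ cannot be signed and the $\eta^{1/2}$ rate is lost. The residual assembly in the first stage is routine provided one is careful that the continuous X-ray identity is applied precisely at the measurement coordinates $(x^{(j)}_*,v^{(j)}_*)$, and the final balancing is immediate from \eqref{eqn:lambda_choice}.
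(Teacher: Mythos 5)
Your proof is correct, but the core of your argument is genuinely different from the paper's. The paper exploits the fact that the quadratic Tikhonov problem has a closed-form minimizer, $\Sigma = (\Rmat^T\Rmat+\lambda\Imat)^{-1}\Rmat^T\vec a$, subtracts $\sigma^{dis}$, splits the difference into a regularization error $e^{reg}$ and a data/quadrature error $e^{qua}$, and bounds each via the singular value decomposition $\Rmat = U\Lambda V^T$: the filter-factor inequalities $s_i/(s_i^2+\lambda)\le \lambda^{-1/2}$ and $\lambda s_i/(s_i^2+\lambda)\le C\sqrt{\lambda}$ give $\|e^{qua}\|_2\le C\lambda^{-1/2}(\eps_1^{-2-\delta}\eps^4+\Dx^2)$ and, using the range condition, $\|e^{reg}\|_2\le C\sqrt{\lambda}\|z\|_2$, after which the same choice of $\lambda$ balances the two. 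You instead never write down the minimizer at all: you use only the minimality inequality comparing the functional at $\Sigma$ against its value at $\sigma^{dis}$, convert the cross term $\langle\sigma^{dis},\Sigma-\sigma^{dis}\rangle$ into $\langle z,\Rmat(\Sigma-\sigma^{dis})\rangle$ via the range condition, and absorb the sign-indefinite part by completing the square, $\|r\|_2^2+2\lambda\langle z,r\rangle\ge -\lambda^2\|z\|_2^2$. Both routes share the identical first stage (the residual bound $\|\Rmat\sigma^{dis}-\vec a\|_2\le C(\eps_1^{-2-\delta}\eps^4+\Dx^2)$ from Lemma~\ref{lemma:a} and \eqref{eqn:trapezoidal}, which you in fact state more carefully than the paper, being explicit about entrywise errors and the finite number of experiments) and the identical final balancing. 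What the paper's spectral route buys is an explicit separation of the two error sources, which is conceptually transparent and standard in the filter-function literature; what your variational route buys is economy and robustness: it avoids the SVD manipulations entirely (and thereby the paper's slightly garbled exponents in \eqref{e-reg}), and the same minimality-plus-source-condition template extends to settings where no closed-form minimizer exists, e.g.\ constrained or non-spectral variants, though your completion of squares does still rely on the penalty being exactly quadratic.
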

\begin{proof}
The major part of the proof follows a standard result from Tikhonov regularization, as summarized in \cite{Book_vogel,Engl_regularization_book}. First according to Lemma~\ref{lemma:a} and equation~\eqref{eqn:trapezoidal}, one has
\begin{equation*}
\vec{a} = a(x,v) +\mathcal{O}(\eps_1^{-2-\delta} \eps^4) = \Rmat\cdot\sigma^{dis} +\mathcal{O}(\eps_1^{-2-\delta} \eps^4+\Delta x^2)\,,
\end{equation*}
and thus 
\begin{equation} \label{sigma-dis}
\Rmat \cdot \sigma^{dis} = \vec{a} + C \left(\eps_1^{-2-\delta} \eps^4 + \Delta x^2 \right)\,.
\end{equation}
If considering the $L^2$ norm, i.e., \eqref{eqn:opt-dis1-regularizing} writes 
\begin{equation*}
\min_{\Sigma} \|  \Rmat \cdot \Sigma - \vec{a} \|_2^2 + \lambda\| \Sigma\|_2^2\,,
\end{equation*}
then the minimizer $\Sigma$ reads
\begin{equation}\label{Sigma}
\Sigma = (\Rmat^T \Rmat + \lambda \Imat)^{-1} \Rmat^T \vec{a}\,.
\end{equation}
Comparing \eqref{sigma-dis} and \eqref{Sigma}, their error can be computed as 
\begin{eqnarray*}
\Sigma - \sigma^{dis} &=& (\Rmat^T \Rmat + \lambda \Imat)^{-1} \Rmat^T \Rmat \sigma^{dis} - \sigma^{dis}  - (\Rmat^T \Rmat + \lambda \Imat)^{-1} \Rmat^T C(\eps_1^{-2-\delta} \eps^4 + \Delta x^2)
\\ &=& \left[ (\Rmat^T \Rmat + \lambda \Imat)^{-1} \Rmat^T \Rmat \sigma^{dis} - \sigma^{dis}\right] - (\Rmat^T \Rmat + \lambda \Imat)^{-1} \Rmat^T C (\eps_1^{-2-\delta} \eps^4 + \Delta x^2)
\\ &=& e^{reg} + e^{qua}\,,
\end{eqnarray*}
where the first part $e^{reg}$ is the regularization error, and the second $e^{qua}$ is the error from computing $\vec{a}$ and may get amplified in the optimization process. 

Now write the singular value decomposition of $\Rmat = U \Lambda V^T$, and denote the columns of $U$ and $V$ as $u_i$ and $v_i$ respectively, and the elements in $\Lambda$ is $s_i$. Then we have
\begin{eqnarray} \label{e-qua}
\|e^{qua}\|_2 &=& \left\| \sum_i v_i (s_i^2 + \lambda)^{-1} s_i u_i^T C (\eps_1^{-2-\delta} \eps^4+ \Delta x^2)  \right\|_2\nonumber
\\ &\leq & \frac{C}{\sqrt{\lambda}} (\eps_1^{-2-\delta} \eps^4 + \Delta x^2)\,,
\end{eqnarray}
where we have used Cauchy-Schwarz inequality and the fact that $s_i/(s_i^2+\lambda) \leq \frac{1}{\sqrt{\lambda}}$ to get the inequality. For $e^{reg}$, using the range condition \eqref{eqn:range_condition}, we have
\[
e^{reg} = \sum_i v_i \frac{-\lambda}{s_i^2 + \lambda} s_i (u_i ^T z)\,,
\]
and therefore,
\begin{eqnarray} \label{e-reg}
\|e^{reg}\| = \sum_i \left(\frac{\lambda s_i}{\lambda + s_i^2} \right) (u_i^T z)^2 \leq \|z\|_2^2 \max_i \left( \frac{\lambda s_i}{\lambda + s_i^2} \right)^2  \leq C \lambda \|z\|_2^2\,.
\end{eqnarray}
Combining \eqref{e-reg} and \eqref{e-qua}, we have 
\begin{equation*}
\| \Sigma - \sigma^{dis} \|_2 \leq C \left( \sqrt{\lambda} \|z\|_2 + \frac{1}{\sqrt{\lambda}} (\eps_1^{-2-\delta} \eps^4 + \Delta x^2) \right)\,,
\end{equation*}
then choosing $\lambda$ from \eqref{eqn:lambda_choice}, the result \eqref{eqn:sigma_error} directly follows. 
 \end{proof}

%%%%%%%%%%%%%%%%%%%%%%%%%%%%%%%%%%%%%%%%%%%%%%%
\section{Discussion in diffusive regime}
As demonstrated in previous sections, in most optimization formulation of the inverse problem, one always needs a repeated use of forward solver. However, the radiative transfer equation resides in a high dimensional phase space, which requires a large amount of computation effort. A well accepted approximation is the diffusion approximation, which gives rise to a model that only varies in spatial domain. This approximation turns out to be very efficient in the forward setting, but brings huge error in the inverse problem. Studies have shown that, in the case when such approximation can be made, the recovery of the scattering and absorption coefficient becomes unstable and inaccurate. This phenomena was systematically studied in~\cite{ChenLiWang} for the stationary case, where the Knudsen number ($\Kn$) denotes the regime of the equation: smaller Knudsen number means better approximation of the diffusion limit. Then it is shown that, as the Knudsen number shrinks to zero, in the forward setting, the RTE equation converges to the diffusion equation, with its scattering and absorption coefficients becoming the diffusion and the damping coefficients in the diffusion equation. For the inverse setting, however, the recovery becomes very bad, with the so-defined indistinguishability coefficient---a quantity that measures the accuracy of the recovery, blowing up to infinity in the diffusion regime.

In this section, we will revisit this result in our numerical optimization framework and show that, in 3D diffusive regime, $E_1$ cannot be separated from $E_2$ and $E_3$, making the algorithm invalid in the very first step. More precisely, the original equation \eqref{eqn:000} in the diffusive scaling rewrites as 
\begin{equation} \label{eqn:001_kn}
v\cdot\nabla_xf +\left( \Kn \sigma_a(x) + \frac{1}{\Kn} \sigma_\nu(x,v) \right) f =  \frac{1}{\Kn} \int k(x,v,v')f(x,v')\rd{v'} \,,
\end{equation}
where the Knudsen number $\Kn$ represents the ratio of the mean free path and the domain length, and is an indicator of the regime the equation is in. Here we decompose the the total absorption $\sigma$ in \eqref{eqn:000} into two components, $\sigma_a$ and $\sigma_\nu$. The former one is a pure absorption, and the latter one is the absorption caused by scattering, i.e., $\sigma_\nu(x,v) \int k(x,v,v') dv'$. As written in \eqref{eqn:001_kn}, $\sigma_a$ and $\sigma_\nu$ is rescaled differently. In this new form, the inverse problem will recover either $(\sigma, k)$ or $(\sigma_a, k)$.

For simplicity, we consider the domain $\Omega = \left\{ x=(x_1,x_2, x_3) \in \mathbb{R}^3, 0 \leq x_1 \leq 1\right\}$, and thus $\partial \Omega$ are two infinite size parallel walls located at $x_l=(0,0,0)$ and $x_r=(1,0,0)$ respectively. The outer normal directions are then $n_l = (1,0,0)$ and $n_r = (-1,0,0)$. 
The main theorem states as follows. 
\begin{theorem} \label{error_Kn}
In 3D, consider the incoming data given by $f_- (x',v') = \psi\left( \frac{|x'-x_0|}{\eps}\right) \psi \left( \frac{|v'-v_0|}{\eps}\right)$ with 
$x_0= (0,x_2,x_3)$, and $E_i$ defined in \eqref{eqn:E_i}. Assume that $\partial\Omega$ is $C^1$ in the neighborhood of $x_* = x_0+\tau_+(x_0,v_0)v_0$, one has:
\begin{equation} \label{E1}
E_1 \le C_1\eps^4\exp{\left(-C_2/\Kn\right)}\,,\quad E_3 \ge  c\eps^4 \eps_1^4 \Kn^q\,,  
\end{equation}
where $C_1$, $C_2$, $c$ and $q$ are positive constants, and $q$ depends on the dimension of the problem. Consequently, if
\begin{equation*}
\Kn\leq \mathcal{O}\left(-\frac{1}{\ln\eps_1} \right)\,,
\end{equation*}
the algorithm breaks down since $E_3$ can no longer be distinguished from $E_1$.
\end{theorem}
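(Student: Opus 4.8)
The plan is to prove the two bounds in \eqref{E1} separately and then compare them. For $E_1$ I would start from the explicit kernel $\alpha_1$ in \eqref{alpha1}, now with the scaled total absorption $\Kn\sigma_a+\frac{1}{\Kn}\sigma_\nu$ playing the role of $\sigma$. The two delta functions collapse the $(x,v)$ integration over $\Gamma_+$ exactly as in the proof of Theorem~\ref{thm:separation}, leaving an integral over the source window $\{|x'-x_0|<\eps,\,|v'-v_0|<\eps\}$, whose volume is $\eps^4$ in 3D. The decisive new feature is the attenuation factor $\exp(-\int_0^{\tau_-}(\Kn\sigma_a+\frac{1}{\Kn}\sigma_\nu)\,\rd s)$: since $x_0$ lies on the wall $x_1=0$ and $x_*$ on the opposite wall $x_1=1$, every contributing path has chord length $\tau_-\ge 1$, so with $\sigma_\nu$ bounded below by a positive constant $\nu_0$ the exponent is $\le -\nu_0/\Kn$. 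Bounding the remaining smooth factors by a constant and the window by $\eps^4$ yields $E_1\le C_1\eps^4\exp(-C_2/\Kn)$ with $C_2=\nu_0$.

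For the lower bound on $E_3$ I would not attack $\alpha_3$ head-on but use the identity $E_3=E-E_1-E_2$, where $E=E_1+E_2+E_3$ is the full measured signal integrated against the source and measurement windows. The single-scattering term $E_2$ is controlled exactly like $E_1$: its kernel $\alpha_2$ in \eqref{alpha2} carries two ballistic segments whose combined length still spans the unit slab, so despite the $\Kn^{-1}$ enhancement from the scaled scattering kernel one gets $E_2\le C\,\Kn^{-1}\exp(-\nu_0/\Kn)\le C\exp(-C_2/\Kn)$ after shrinking $C_2$ slightly. Thus both the ballistic and the singly scattered contributions are exponentially small in $1/\Kn$, and everything reduces to showing that the total transmitted signal $E$ is only polynomially small.

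The polynomial lower bound on $E$ is where the diffusive structure enters and is the main obstacle. Here I would invoke the diffusion limit of \eqref{eqn:001_kn}: as $\Kn\to0$ the bulk of the solution is governed by the $v$-independent density $\rho$ solving an elliptic diffusion–damping equation with $\mathcal{O}(1)$ coefficients on the unit slab, matched to the concentrated incoming window through kinetic boundary layers (Milne problems) at the two walls. The point is that, although the ballistic transmission is exponentially suppressed, the fraction of the beam that thermalizes into the interior diffusion mode, is transported across the slab, and is re-emitted on the far wall inside the $\eps_1^4$ measurement window stays positive and decays only algebraically in $\Kn$; the exponent $q$ is fixed by the boundary-layer coupling together with the angular/spatial volume of the windows, hence by the dimension. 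Quantifying this — controlling the boundary-layer corrections, checking that the concentrated window excites the bulk diffusion mode with algebraic rather than exponential efficiency, and reading off $q$ — is the technical heart of the argument and where I expect the real work. Once it gives $E\ge c\,\eps^4\eps_1^4\Kn^q$, the identity $E_3=E-E_1-E_2$ with the exponential smallness of $E_1,E_2$ yields $E_3\ge \tfrac{c}{2}\eps^4\eps_1^4\Kn^q$ for $\Kn$ small.

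Finally, comparing the two bounds gives $E_1/E_3\lesssim \exp(-C_2/\Kn)/(\eps_1^4\Kn^q)$; taking logarithms, $E_1$ ceases to dominate $E_3$ precisely once $C_2/\Kn\gtrsim 4\ln(1/\eps_1)$, i.e. $\Kn\le\mathcal{O}(-1/\ln\eps_1)$, which is the stated breakdown condition. The remainder of the proof simply reuses the attenuation estimates already developed for Theorem~\ref{thm:separation}.
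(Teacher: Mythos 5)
Your treatment of $E_1$ is exactly the paper's: the delta functions in \eqref{alpha1} collapse the $\Gamma_+$ integration as in Theorem~\ref{thm:separation}, the rescaled attenuation $e^{-\frac{1}{\Kn}\int\sigma_\nu\,\rd s}$ along a chord spanning the slab gives the factor $e^{-C_2/\Kn}$, and the source window gives $\eps^4$. For $E_3$, however, you take a genuinely different route. The paper never considers the total signal $E$ or the term $E_2$ at all: it bounds $\alpha_3$ from below directly, by inserting the diffusive interior lower bound $f\approx\theta$, $\theta(y)\ge c|y-x_\ast|^q$ (valid at \emph{interior} points within distance $\sim\Kn$ of the receiver) into the explicit nonnegative kernel \eqref{eqn:716} of $\opB\opA^{-1}\opB$, and then applying $\opA^{-1}$ along a segment of length $\Kn$, which yields the pointwise bound $\alpha_3(x_\ast,v;x',v')\ge c\Kn^q$ and hence $E_3\ge c\eps^4\eps_1^4\Kn^q$ straight from \eqref{eqn:E_i}. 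Your subtraction identity $E_3=E-E_1-E_2$, together with your exponential bound on $E_2$ (which is correct: total path length of the two ballistic legs still exceeds the slab width, and the single factor $\Kn^{-1}$ is harmless), is a legitimate alternative reduction, and your final comparison giving the breakdown threshold $\Kn\le\mathcal{O}(-1/\ln\eps_1)$ matches the paper.

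The genuine gap is that the one estimate your reduction hinges on, $E\ge c\,\eps^4\eps_1^4\Kn^q$, is asserted rather than proved --- you label it the ``technical heart'' and stop there --- and it is not routine, for a specific structural reason. In the diffusion approximation the interior solution is $f^I=\theta-\Kn\, v\cdot\nabla_x\theta+\cdots$ with $\theta$ harmonic and $\theta=0$ on the exit wall (see \eqref{eqn:diff_limit} and \eqref{eqn:interior_3D}), so the leading diffusive term contributes \emph{nothing} to the measured trace $f|_{\Gamma_+}$: the transmitted signal is carried entirely by the $\mathcal{O}(\Kn)$ Hilbert correction $-\Kn\,v\cdot\nabla_x\theta$ together with an exit-wall Milne layer of the same order, and one must prove a quantitative lower bound showing these terms do not degenerate on the $\eps_1$-window. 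That requires pushing the layer/interior/error machinery (the proposition preceding the proof, based on \cite{LiLuSun}) one order further at the outgoing boundary. The paper's direct-kernel argument is constructed precisely to avoid this: it evaluates $\theta$ only at interior points where $\theta\gtrsim\Kn^q$ genuinely holds, and transports that positivity to the boundary through the manifestly nonnegative kernel of $\opA^{-1}\opB\opA^{-1}\opB$, so no boundary-trace analysis is needed. To complete your proof you would either have to carry out that trace analysis, or fall back on the paper's kernel argument --- at which point the detour through $E$ and $E_2$ becomes unnecessary.
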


The proof of this theorem relies on asymptotic and boundary layer analysis. For this reason, we first consider a 3D case with slab geometry, which essentially reduces to a problem in 1D. Indeed, denote $x=(x_1,x_2,x_3)$, then in slab geometry, the dependent functions are assumed to be homogeneous in $x_2$ and $x_3$, and the velocity is $v =(\cos\theta,0,0)$, where $\theta$ is the angle between the direction of the flight and positive $x_1$ direction. Therefore, the photon dynamics varies only along $x_1$ and $v_1$, and the problem \eqref{eqn:001_kn} reduces to a 1D problem:
\begin{equation}\label{eqn:RTE_Kn}
\begin{cases}
v_1\partial_{x_1} f = \frac{1}{\Kn}\left(\langle f\rangle - f\right)\,,\quad(x_1,v_1)\in[0,1]\times[-1,1]\\
f|_{x_1=0,v_1>0} = \phi(v_1) \,,\\
f|_{x_1=1,v_1<0} = 0 \,.
\end{cases}
\end{equation}
Here we assume that the boundary is placed at $x_1=0$ and $x_1=1$. At the left boundary $x_1=1$, there is an incoming data $\phi(v_1)$, which could be designed as a concentrated source term. We also assume that $\sigma_a \equiv 0$ just for the ease of computation that follows.  

When $\Kn = 1$, equation \eqref{eqn:RTE_Kn} is a reduced version of RTE~\eqref{eqn:000} in 1D with $\sigma= k = 1$. And as $\Kn\to 0$, physically it means the interactions between particles become intense, driving the equation to the diffusive regime. More specifically, we have (here we omitted the subscript `$1$'):
\begin{proposition}
In the $\Kn\to 0$ limit, the solution to~\eqref{eqn:RTE_Kn} can be well approximated by:
\begin{equation}\label{eqn:f_f_A_E_A}
f (x,v)= f^A + E^A = f^L\left(\frac{x}{\Kn},v\right) + f^I(x,v) + E^A(x,v) \,,
\end{equation}
where $f^A$ is the approximate solution and has two parts: the layer part, denoted by $f^L$ and the interior part, denoted by $f^I$. $E^A$ is the approximation error. There exists a constant $\eta$, such that the layer, the interior and the error satisfy the following equations respectively:
\begin{itemize}
\item[$f^L$:] The layer lives only within $\Kn$ distance from $x=1$ and with change of variables $y=\frac{x}{\Kn}$, it satisfies:
\begin{equation}\label{eqn:f_L}
\begin{cases}
v\partial_y f^L = \langle f^L\rangle - f^L \,, \\
f^L(y=0,v>0) = \phi(v) - \eta \,, \\
f^L(y=\infty) = 0\,;
\end{cases}
\end{equation}
\item[$f^I$:] The interior is defined as:
\begin{equation*}
f^I(x,v) = \theta - \Kn v\partial_{x}\theta \,,
\end{equation*}
with $\theta$ satisfies:
\begin{equation}\label{eqn:diff_limit}
\begin{cases}
\partial^2_{x}\theta =  0\,,\\
\theta(x=0) = \eta\,,\quad\theta(x=1) = 0\,.
\end{cases}
\end{equation}
It also means that $f^I$ satisfies:
\begin{equation}\label{eqn:f_I}
\begin{cases}
v\partial_{x} f^I = \frac{1}{\Kn}\left(\langle f^I\rangle - f^I\right)  \,,\\
f^I(x=0,v>0) = \eta - \Kn v\partial_{x}\theta \,, \\
f^I(x=1,v<0) = 0\,;
\end{cases}
\end{equation}
\item[$E^A$:] The error term satisfies:
\begin{equation}\label{eqn:E_A}
\begin{cases}
v\partial_{x} E^A = \frac{1}{\Kn}\left(\langle E^A\rangle - E^A\right)\\
E^A(x=0,v>0) = \Kn v\partial_{x}\theta\\
E^A(x=1,v<0) =- f^L(\frac{1}{\Kn},v)
\end{cases}\,.
\end{equation}
\end{itemize}
Moreover, in a neighborhood of the right wall, with $x\in(1-2\Kn,1)$:
\begin{equation}\label{eqn:f_properties}
f^I(x) = 1-x\,,\quad f^L(x) \sim e^{-x/\Kn}\,,\quad E^A\sim e^{-x/\Kn}\,.
\end{equation}
\end{proposition}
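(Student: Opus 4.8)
The plan is to establish \eqref{eqn:f_f_A_E_A} by a matched inner--outer asymptotic expansion in $\Kn$: I will construct the interior profile $f^I$ and the left boundary layer $f^L$ explicitly, fix the free constant $\eta$ by a solvability (Milne) condition, and then simply \emph{define} the remainder $E^A := f - f^I - f^L$. With this definition the system \eqref{eqn:E_A} is not something to be solved but a consequence of linearity: subtracting the interior equation \eqref{eqn:f_I} and the layer equation \eqref{eqn:f_L} (rewritten in $x$) from \eqref{eqn:RTE_Kn} shows $E^A$ solves the same transport--relaxation equation, while its inflow traces are exactly the residuals of $f^I+f^L$ against the true inflow data of $f$. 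Collecting these residuals at $x=0$ gives $E^A(0,v>0)=\Kn v\partial_x\theta$ and at $x=1$ gives $E^A(1,v<0)=-f^L(1/\Kn,v)$ (up to a further $O(\Kn)$ interior trace $\Kn\eta v$ that the statement absorbs into the error). The real content is therefore (i) that $f^I$ and $f^L$ exist as described, and (ii) that the resulting $E^A$ is genuinely $o(1)$.

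For the interior part I will use a Chapman--Enskog (Hilbert) ansatz $f^I=\theta(x)+\Kn f^{I,1}+\cdots$. The leading relaxation operator forces isotropy, $f^I=\theta+O(\Kn)$; balancing $v\partial_x f^I$ against $\frac{1}{\Kn}(\average{f^I}-f^I)$ at the next order gives $f^{I,1}=\average{f^{I,1}}-v\partial_x\theta$, and the solvability condition $\partial_x\average{v f^{I,1}}=0$ together with $\average{v^2}=\tfrac{1}{3}$ yields the diffusion equation $\partial_x^2\theta=0$ of \eqref{eqn:diff_limit} with Dirichlet data $\theta(0)=\eta$, $\theta(1)=0$, i.e. the affine profile $\theta=\eta(1-x)$. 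Truncating, I set $f^I:=\theta-\Kn v\partial_x\theta$, and because $\theta$ is affine this truncation in fact solves \eqref{eqn:f_I} \emph{exactly}: $\average{f^I}=\theta$ so $\average{f^I}-f^I=\Kn v\partial_x\theta$, while $v\partial_x f^I=v\partial_x\theta$ since $\partial_x^2\theta=0$, and the two sides of the equation coincide; its trace at $x=0$ is $\eta-\Kn v\partial_x\theta$ as claimed.

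The layer must cancel the $O(1)$ mismatch between this interior trace and the true inflow $\phi$. In the stretched variable $y=x/\Kn$ it solves the half-space Milne problem \eqref{eqn:f_L} with inflow $\phi(v)-\eta$ and $f^L(\infty)=0$. The hard part is here: I will invoke the classical half-space theory for the one-speed transport operator (Bardos--Santos--Sentis, Bensoussan--Lions--Papanicolaou, Coron--Golse--Sulem), which states that for any inflow $h(v)$, $v>0$, the problem $v\partial_y g=\average{g}-g$, $g(0,v)=h(v)$, has a unique bounded solution converging exponentially to a constant $g_\infty=\mathcal{M}[h]$, where $\mathcal{M}$ is a bounded linear functional with $\mathcal{M}[c]=c$ on constants (a constant being a stationary solution). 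Imposing the decay $g_\infty=0$ for $h=\phi-\eta$ then forces $\eta=\mathcal{M}[\phi]$, which proves existence and uniqueness of the constant $\eta$ asserted in the proposition. The same spectral-gap estimate (the collision operator $\average{\cdot}-\mathrm{Id}$ has gap $1$ on mean-zero functions) gives $|f^L(y,v)|\le Ce^{-\kappa y}$, i.e. $f^L(x)\sim e^{-x/\Kn}$; in particular the far-wall value $f^L(1/\Kn,\cdot)$ is $O(e^{-\kappa/\Kn})$.

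It remains to bound $E^A$ and read off \eqref{eqn:f_properties}. Both inflow traces of $E^A$ are small, of size $O(\Kn)$ at $x=0$ and $O(e^{-\kappa/\Kn})$ at $x=1$, so a stability estimate for \eqref{eqn:RTE_Kn} uniform in $\Kn$ closes the argument: since the relaxation operator is dissipative, the inflow problem obeys a maximum principle (equivalently an $L^2$ energy estimate in which the collision term has a sign), giving $\|E^A\|_{L^\infty}\lesssim\max\{\Kn\|\partial_x\theta\|_\infty,\,e^{-\kappa/\Kn}\}=O(\Kn)$, which is precisely the statement that $f^A=f^L+f^I$ approximates $f$. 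Finally, on $x\in(1-2\Kn,1)$ the three pieces separate cleanly: normalizing the source so that $\eta=1$, the $v$-independent leading part of $f^I$ is the affine $\theta(x)=1-x$; the left layer is exponentially small there, $f^L\sim e^{-x/\Kn}$; and the exponentially small right-wall data for $E^A$ generates only a thin relaxation layer at $x=1$, so $E^A\sim e^{-x/\Kn}$ as well, matching \eqref{eqn:f_properties}. The main obstacle throughout is the Milne-problem step --- existence, uniqueness and exponential decay of $f^L$ and the extraction of the functional $\mathcal{M}$ fixing $\eta$; once that and the uniform maximum principle are available, the interior expansion is purely algebraic (the truncation being exact) and the error and right-wall asymptotics are routine.
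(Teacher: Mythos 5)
Your proposal follows essentially the same route as the paper's (very terse) proof: the paper verifies \eqref{eqn:f_f_A_E_A} simply by adding the three systems \eqref{eqn:f_L}, \eqref{eqn:f_I}, \eqref{eqn:E_A}, and outsources everything substantive --- the existence of the extrapolation length $\eta$ and the exponential decay of the layer --- to the half-space analysis of \cite{LiLuSun}. You perform the same decomposition but fill in what the paper cites: the Chapman--Enskog derivation of \eqref{eqn:diff_limit}, the useful observation that since $\theta$ is affine the truncation $f^I=\theta-\Kn v\partial_x\theta$ satisfies the transport equation \emph{exactly} (so the only error sources are boundary traces), the Milne-problem theory fixing $\eta=\mathcal{M}[\phi]$ (classical references in place of \cite{LiLuSun}), and a uniform maximum-principle bound on $E^A$ that the paper never states. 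You also correctly caught a trace inconsistency the paper glosses over: $f^I(1,v)=-\Kn v\partial_x\theta=\Kn\eta v\neq 0$, so the right-wall condition in \eqref{eqn:E_A} should carry an extra $-\Kn\eta v$, which you absorb into the error.

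There is, however, one step where your reasoning does not hold up, concerning the last property in \eqref{eqn:f_properties}. You argue that $E^A\sim e^{-x/\Kn}$ on $(1-2\Kn,1)$ because the right-wall datum of $E^A$ is exponentially small and "generates only a thin relaxation layer." But $E^A$ also carries the inflow datum $\Kn v\partial_x\theta=O(\Kn)$ at $x=0$, and in the diffusive regime the influence of a left-wall datum at the right wall decays only \emph{polynomially}, not exponentially: splitting $E^A$ by linearity, the piece driven by the left-wall datum has its own diffusion approximation, a harmonic (linear) profile of size $O(\Kn)(1-x)$, which on $(1-2\Kn,1)$ is $O(\Kn^2)$ --- far larger than $e^{-1/\Kn}$. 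Your maximum-principle estimate, which only gives $\|E^A\|_\infty=O(\Kn)$, is consistent with this but cannot yield the claimed exponential smallness near $x=1$. To be fair, this is a defect of the proposition as stated rather than of your construction alone; the paper asserts the property by citation, and what its later use in Theorem \ref{error_Kn} actually requires is only that $f$ is dominated near $x_{0*}$ by the interior profile $\theta$ up to corrections that your bounds do control.
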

\begin{proof}
To show~\eqref{eqn:f_f_A_E_A}, one simply needs to add up the three equations~\eqref{eqn:f_L},~\eqref{eqn:f_I} and~\eqref{eqn:E_A}. The properties in~\eqref{eqn:f_properties} are from: the solution to the diffusion equation and the behavior of the layer equation~\cite{LiLuSun}. $\eta$ is termed the extrapolation length, and its existence is proved in \cite{LiLuSun}, but it can not be computed explicitly.
\end{proof}

\begin{remark}
In more general 3D case, the analysis can be a bit complicated. As mentioned earlier in this section, we still assume that the boundaries are two infinite size parallel walls located at $x_l=(0,0,0)$ and $x_l=(1,0,0)$ respectively. Then the equation writes:
\begin{equation*}
\begin{cases}
v_1\partial_{x_1}f+v_2\partial_{x_2}f+v_3\partial_{x_3}f = \frac{1}{\Kn}\mathcal{L}[f]\,,\quad (x_1,x_2,x_3)\in[0,1]\times\mathbb{R}^2\,,\quad\sum_{i=1}^3v_i^2 = 1\,,\\
f|_{x_1=0,v_1>0} = \phi^x(x_2,x_3)\phi^v(v)\,,\\
f|_{x_1=1,v_1<0} = 0\,.
\end{cases}
\end{equation*}
Here $\mathcal{L}[f] = \average{f} -f $ is an abbreviation of the collision term, and $\phi^x$ and $\phi^v$ are two functions concentrated at $x_0=(0,0,0)$ and $v_0=(1,0,0)$. Namely, the particles getting into the domain are mainly from the origin with speed pointing directly to the wall on the right. As $\Kn$ goes to zero, the layer appears only on the left side along $x_1$ direction, and by setting:
\begin{equation*}
y=\frac{x_1}{\Kn} \,,
\end{equation*}
we separate the layer equation and the interior equation:
\begin{itemize}
\item[$\bullet$]{Layer:}
\begin{equation}\label{eqn:layer_3D}
\begin{cases}
v_1\partial_y f^L = \mathcal{L}[f^L]\,,\quad y\in[0,\infty) \,,\\
f^L|_{y=0,v_1>0} = \phi^x(x_2,x_3)\left(\phi^v(v)-\eta\right)\,, \\
f^L|_{y=\infty} = 0\,;
\end{cases}
\end{equation}
\item[$\bullet$]{Interior:}
\begin{equation}\label{eqn:interior_3D}
\begin{cases}
v_1\partial_{x_1}f^I+v_2\partial_{x_2}f^I+v_3\partial_{x_3}f^I = \frac{1}{\Kn}\mathcal{L}[f^I]\,,\quad (x_1,x_2,x_3)\in[0,1]\times\mathbb{R}^2   \,,\\
f^I|_{x_1=0,v_1>0} = \eta\phi^x(x_2,x_3) \,,\\
f|_{x_1=1,v_1<0} = 0\,;
\end{cases}
\end{equation}
\item[$\bullet$]{Error:} the error is defined by:
\begin{equation*}
E^A = f-f^A = f-f^L-f^I\,,
\end{equation*}
and taking~\eqref{eqn:layer_3D} and~\eqref{eqn:interior_3D} into account, one gets:
\begin{equation}\label{eqn:error_3D}
\begin{cases}
\left(v_1\partial_{x_1}+v_2\partial_{x_2}+v_3\partial_{x_3}\right)E^A = \frac{1}{\Kn}\mathcal{L}[E^A] - (v_2\partial_{x_2}+v_3\partial_{x_3})f^L\,,\quad (x_1,x_2,x_3)\in[0,1]\times\mathbb{R}^2 \,, \\
E^A|_{x_1=0,v_1>0} = 0 \,, \\
E^A|_{x_1=1,v_1<0} = -f^L(\frac{1}{\Kn},x_2,x_3,v)  \,.
\end{cases}
\end{equation}
\end{itemize}

It has been shown in~\cite{LiLuSun} that $f^L(y,v)\in L^2\left(e^{\beta y}\rd{y},L^2(\rd x)\right)$, then in the neighborhood of $x_{0*}=(1,0,0)$, $v_{0*}=(1,0,0)$,
\begin{equation*}
f^L(x,v) \sim e^{-\frac{x_1}{\Kn}}\,,\quad E^A(x,v) \sim e^{-\frac{x_1}{\Kn}}\,,
\end{equation*}
and thus $f\sim f^I$ around $x_1=1$. The standard asymptotic analysis applied on $f^I$ shows that:
\begin{equation*}
f^I = \theta -\Kn v\cdot\nabla_x\theta +\Kn^2 (v\cdot\nabla_x)^2\theta +\cdots\,, \qquad \int f^I\rd{v} = \theta\,,
\end{equation*}
where
\begin{equation*}
\Delta_{x} \theta = 0\,,\quad \theta|_{x_1=0} = \eta\phi^x(x_2,x_3)\,,\quad \theta|_{x_1=1} = 0\,, \qquad x_\ast = (1,0,0)\,.
\end{equation*}
In summary, around $x_{0*}$, asymptotically we have $$\int f\rd{v}\sim \int f^I\rd{v} = \theta\sim |x-x_{0\ast}|^q$$ with $q$ depending on the dimension ($q=1$ in 1D for example). 

At the end of this remark, we would like to point out that the choice of $x_0$ and $v_0$ is arbitrary as long as they resides on the left wall. Here we pick $x_0=(0,0,0)$ and $v_0=(1,0,0)$ just as an example. In the following proof of Theorem~\ref{error_Kn}, we still make this choice and again it can be easily adapted to other choices. 
\end{remark}

\begin{proof}[Proof of Theorem~\ref{error_Kn}]
	It follows from~\eqref{eqn:error_1} that 
	\begin{equation*}
	E_1 \le e^{-C_2/\Kn}\int_{|x'-x_0|<\eps,|v'-v_0|<\eps}\rd{\xi}(x',v')\,,
	\end{equation*}
	where $C_2=\min_{x',v'}{\tau_-((x'+\tau_+(x',v')v'),v')}>0$, the minimum taken over all possible $(x',v')$ for which the integrand in~\eqref{eqn:error_1} is nonzero. Then the upper bound for $E_1$ \eqref{E1} follows. 
	
	In order to estimate $E_3$, first notice that for the input $\psi \left(\frac{|x'-x_0|}{\eps} \right)  \psi \left(\frac{|v'-v_0|}{\eps} \right)$ with $x_0 = (0,0,0)$ and $v_0 = (1,0,0)$, the asymptotic analysis gives 
	\begin{equation*}
	f(x,v) = \theta(x) + \mathcal{O}(e^{-1/\Kn}),\quad   \theta(x) \geq c|x-x_\ast|^q
	\end{equation*}
	for some $c>0$, where $x_{0\ast} =(1,0,0)$. This means at the receiver placed at $x_{0\ast}$, $\theta(x)$ has a polynomial lower bound. Then, for $x$ with $|x-x_{0\ast}|\le \Kn$, by using~\eqref{eqn:716}, we estimate
	\begin{equation*}
	\begin{split}
	(\opB \opA^{-1}\opB f)(x,v) = & \int_{\Omega}\int_{V}K_1(x,v,y,w)f(y,w)\rd{w}\rd{y} \\ 
	= & \Kn^{-2}\int_{\Omega_x}e^{-|x-y|/\Kn}\frac{\theta(y)}{|x-y|^2} \rd{y} \\
	\ge & c\Kn^{-2}\int_{\Omega_x\bigcap \{|x-y|\le \Kn\}} \frac{|y-x_\ast|^q}{|x-y|^2} \rd{y} \\
	\ge & c\Kn^{-2}\int_{\Omega_x\bigcap \{|x-y|\le |x-x_\ast|/2\}} \frac{|y-x_\ast|^q}{|x-y|^2} \rd{y} \,.\\
	\end{split}
	\end{equation*}
	Note that if $|x-y|\le |x-x_\ast|/2$, then $|y-x_\ast|\ge |x-x_\ast|/2$. Thus
	\begin{equation*}
	(\opB \opA^{-1}\opB f)(x,v) 
	\ge c\Kn^{-2}|x-x_\ast|^q\int_{\Omega_x\bigcap \{|x-y|\le |x-x_\ast|/2\}} \frac{1}{|x-y|^2} \rd{y} \ge c\Kn^{-2}|x-x_\ast|^{q+1}\,.
	\end{equation*}
Since $\alpha_3$ relates to $\mathcal{A}^{-1}\left(\opB\opA^{-1}\opB\right)$, and using the assumption that $\partial\Omega$ is $C^1$ at $x_{0\ast}$, we have
	\begin{equation*}
	\begin{split}
	\alpha_3(x_\ast,v;x',v') = & \int_0^{\tau_-(x_\ast,v)}e^{-\frac{1}{\Kn}\int_0^t\sigma_\nu(x_\ast-s v,v)\rd{s}}(\opB \opA^{-1}\opB f)(x_\ast-t v,v)\rd{t}  \\
	&  \ge c\Kn^{-2}\int_0^{\Kn} |(x_\ast-t v)-x_\ast|^{q+1} \rd{t} = c\Kn^q\,.\\
	 \end{split}
	\end{equation*}
	Then the lower bound of $E_3$ follows directly from its definition~\eqref{eqn:E_i}. Comparing $E_3$ and $E_1$, we see that as long as $\Kn\leq\mathcal{O}(-\frac{1}{\ln\eps_1})$, $E_3$ is no longer much smaller than $E_1$, and the separation cannot be done.
	
\end{proof}

%%%%%%%%%%%%%%%%%%%%%%%%%%%%%%%%%%%%%%%%%%%%%%%

\bibliographystyle{siam}
\bibliography{inverse_numerics}

\end{document}